\documentclass[
12pt,twoside, a4paper,
]{amsart}

\title[Weak approximation]
{           {\protect\hfill \normalfont \tiny
            \\ \vspace{10pt}}
The defect of weak approximation\\
for homogeneous spaces. II\\
}
\author{Mikhail Borovoi}

\address{
Raymond and Beverly Sackler School of Mathematical Sciences,
Tel Aviv University, 69978 Tel Aviv, Israel}
\email{borovoi@post.tau.ac.il}

\thanks{Partially supported by the
Hermann Minkowski Center for Geometry
and by ISF grant 807/07}

\subjclass[2000]{Primary: 14G05, 11E72}

\usepackage[arrow,matrix]{xy}
\usepackage{mathptmx}
\usepackage{amscd}
\usepackage{amssymb}
\usepackage{eucal}
\usepackage{amsxtra}
\usepackage{verbatim}


  \sloppy   \emergencystretch 10pt
\hyphenpenalty=5000
\clubpenalty=5000
\widowpenalty=5000
\displaywidowpenalty=1500


\theoremstyle{plain}

\newtheorem{theorem}{Theorem}[section]
\newtheorem{proposition}[theorem]{Proposition}
\newtheorem{lemma}[theorem]{Lemma}
\newtheorem{corollary}[theorem]{Corollary}

\newtheorem{conditional-result}[theorem]{Conditional Result}

\newtheorem{theorem?}{Theorem(?)}[section]
\newtheorem{proposition?}[theorem]{Proposition(?)}
\newtheorem{lemma?}[theorem]{Lemma(?)}
\newtheorem{corollary?}[theorem]{Corollary(?)}

\newtheorem*{theorem*}{Theorem}
\newtheorem*{proposition*}{Proposition}
\newtheorem*{lemma*}{Lemma}
\newtheorem*{corollary*}{Corollary}
\newtheorem*{question*}{Question}
\newtheorem*{conjecture*}{Conjecture}
\newtheorem*{claim*}{Claim}

\newtheorem*{introtheorem*}{Theorem}
\newtheorem*{introproposition*}{Proposition}
\newtheorem*{introlemma*}{Lemma}
\newtheorem*{introcorollary*}{Corollary}

\theoremstyle{definition}

\newtheorem{definition}[theorem]{Definition}

\newtheorem*{definition*}{Definition}
\newtheorem*{example*}{Example}

\newtheorem{subsec}[theorem]{}

\theoremstyle{remark}

\newtheorem*{remark*}{Remark}

\newtheorem*{acknowledgements}{Acknowledgements}

\DeclareSymbolFont{rsfs}{U}{rsfs}{m}{n}
\DeclareSymbolFontAlphabet{\mathcal}{rsfs}

\DeclareFontEncoding{OT2}{}{} 
\DeclareTextFontCommand{\textcyr}{\fontencoding{OT2}
    \fontfamily{wncyr}\fontseries{m}\fontshape{n}\selectfont}
\newcommand{\Sh}{\textcyr{Sh}}
\newcommand{\Ch}{\textcyr{Ch}}

\newcommand{\sV}{{\mathcal{V}}}
\newcommand{\Vinf}{{\sV_\infty}}

\newcommand{\isoto}{\overset{\sim}{\to}}

\newcommand{\labelto}[1]{\xrightarrow{\makebox[1.5em]{\scriptsize ${#1}$}}}

\def\uu{^\mathrm{u}}
\def\red{^\mathrm{red}}
\def\tor{^{\mathrm{tor}}}

\def\sss{^{\mathrm{ss}}}

\def\ssu{^{\mathrm{ssu}}}

\newcommand{\ZZ}{{\mathbb{Z}}}
\newcommand{\QQ}{{\mathbb{Q}}}

\newcommand{\CT}{J.-L.~Colliot-Th\'el\`ene}

\newcommand{\Gal}{{\rm Gal}}
\newcommand{\Pic}{{\rm Pic}}

\renewcommand{\ker}{{\rm ker}}
\newcommand{\coker}{{\rm coker}}

\newcommand{\Hom}{{\rm Hom}}

\newcommand{\kbar}{{\overline{k}}}

\newcommand{\WAS}{({\rm WA}_S)}

\newcommand{\XX}{{\mathbf{X}}}

\newcommand{\bs}{{\backslash}}
\newcommand{\pn}{\par\noindent}

\newcommand{\CSHG}{{C_S(H,G)}}
\newcommand{\CSH}{{C_S(H)}}

\newcommand{\loc}{{\rm loc}}
\newcommand{\im}{{\rm im}}

\newcommand{\Bbar}{{\overline{B}}}
\newcommand{\Gbar}{{\overline G}}

\newcommand{\That}{{\hat{T}}}
\newcommand{\Hhat}{{\hat{H}}}

\renewcommand{\gg}{{\mathfrak{g}}}
\newcommand{\hh}{{\mathfrak{h}}}
\newcommand{\Aut}{{\rm Aut}}
\newcommand{\Res}{{\rm Res}}

\begin{document}

\begin{abstract}
Let $X$ be a right homogeneous space of  a connected linear algebraic group  $G'$
over a number field $k$, containing a $k$-point $x$.
Assume that the stabilizer of $x$ in $G'$ is connected.
Using the notion of a quasi-trivial group, recently introduced
by Colliot-Th\'el\`ene, we can represent $X$ in the form $X=H\backslash G$,
where $G$ is a quasi-trivial $k$-group and $H\subset G$ is a connected
$k$-subgroup.

Let $S$ be a finite set of places of $k$. Applying results of \cite{B2},
we compute the defect of weak approximation for $X$ with respect to $S$
in terms of the biggest toric quotient $H^{\rm tor}$ of $H$.
In particular, we show that if $H^{\rm tor}$ splits over a metacyclic extension of $k$,
then $X$ has the weak approximation property.
We show also that any homogeneous space $X$ with connected stabilizer
(without assumptions on $H^{\rm tor}$) has the real approximation property.
\end{abstract}

\maketitle

\section{Introduction}

This note is a sequel for \cite{B2},
and we use the notation of that paper.
Let $k$ be a number field, and let $\kbar$ be a fixed algebraic closure of $k$.
We write $\sV$ for the set of all places of $k$,
and $\Vinf$ for the set of its archimedean places.
If $v\in\sV$, we write $k_v$ for the completion of $k$ at $v$.

Let $X$ be an algebraic variety over $k$.
We refer to \cite{B2} for preliminaries on weak approximation for $X$.
If $S\subset\sV$ is a finite set of places,
we write $\WAS$ for the weak approximation property
with respect to $S$.
Thus, ``$X$ has $\WAS$'' means that $X(k)$ is dense in $\prod_{v\in S} X(k_v)$.
We say that $X$ has \emph{the weak approximation property,} if $X$ has $\WAS$
for any finite subset $S\subset\sV$.
We say that $X$ has \emph{the real approximation property,}
if $X$ has the weak approximation property $\WAS$ with respect to $S=\Vinf$.

In \cite{B2} we considered the case $X=H\bs G$, where $H\subset G$
is a connected $k$-subgroup of a connected $k$-group $G$,
assuming that $\Sh(G)=0$ and $A(G)=0$
(the assumption $A(G)=0$ means that $G$ has the weak approximation property).
Under these assumptions we constructed a certain abelian group $C_S(H,G)$
which is the defect of weak approximation for $X$ with respect to $S$:
the variety $X$ has $\WAS$ if and only if $C_S(H,G)=0$.
We initially constructed $\CSHG$ in terms of $H$ and $G$,
but then we computed it in terms of the Brauer group of $X$.

In the present note we consider the case of an arbitrary homogeneous space
with connected stabilizer $X=H'\bs G'$,
where $G'$ is any connected linear $k$-group and $H'\subset G'$ is a connected $k$-subgroup.
Using the notions of a quasi-trivial $k$-group and a flasque resolution,
introduced by \CT\ \cite{CT}, we notice that we can represent $X$ in the form
$X=H\bs G$, where $G$ is a \emph{quasi-trivial} group and $H\subset G$ is a connected $k$-subgroup
(Lemma \ref{lem:repr-quasi-trivial}).
We have $\Sh(G)=0$ and $A(G)=0$, because $G$ is quasi-trivial.
Now we can apply \cite[Theorem 1.3]{B2}.
We obtain that $X$ has $\WAS$ if and only if $\CSHG=0$.

Moreover, we have $\Pic(G_K)=0$ for any field extension $K/k$, because $G$ is quasi-trivial.
Using this fact, we show that the group $\CSHG$ can be computed in terms of $H$ only.
Namely, we construct a group $\CSH$ in terms of $H$ as in \cite{B1}
and prove that $\CSHG=\CSH$ (Lemma \ref{lem:CSHG=CSH}).

We see that $X$ has $\WAS$ if and only if $\CSH=0$.
We say that $\CSH$ is \emph{the defect of weak approximation for $X$ with respect to $S$.}
Note that the group $\CSH$ does not depend on the representation of $X$ in the form $X=H\bs G$
with quasi-trivial $G$ and connected $H$,
because it can be computed in terms of the Brauer group of $X$ (\cite[Theorem 1.11]{B2}).

Let $H\tor$ denote the biggest quotient torus of $H$.
We show that the canonical homomorphism $\CSH\to C_S(H\tor)$ is an isomorphism
(Proposition \ref{prop:Htor-B1}).
It follows that $X$ has $\WAS$ if and only if $C_S(H\tor)=0$.
We notice that
$$
C_S(H\tor)\simeq\coker\left[H^1(k,H\tor)\to\prod_{v\in S} H^1(k_v,H\tor)\right].
$$

Let $L/k$ be a Galois extension splitting the torus $H\tor$.
Let $S_0$ denote the set of (nonarchimedean, ramified in $L$) places $v$ of $k$
such that the decomposition group of $v$ in $\Gal(L/k)$ is noncyclic.
We prove that $C_S(H)=C_{S\cap S_0}(H)$  (Corollary \ref{cor:S0}).

Assume that $S\cap S_0=\emptyset$, i.e. all the places in $S$ have
cyclic decomposition subgroups in $\Gal(L/k)$.
Then $\CSH=0$, hence $X$ has $\WAS$ (Theorem \ref{thm:ScapS0=emptyset}).
In particular, $C_{\sV_\infty}(H)=0$ for any $H$.
Thus any homogeneous space $X$ of a connected $k$-group
with connected stabilizer has the real approximation property
(Corollary \ref{cor:real-approximation}).

Now assume that $H\tor$ splits over a cyclic extension of $k$
(e.g. $H\tor=1$).
Then $S_0=\emptyset$, hence $\CSH=0$ for any $S$,
and $X$ has the weak approximation property (Corollary \ref{cor:Htor-splits-cyclic}).
Moreover, we prove that if $H\tor$ splits over a metacyclic extension,
then $X$ has the weak approximation property (Theorem \ref{thm:Htor-metacyclic}).

These results generalize the results of \cite{B1},
where we assumed that $G$ is semisimple simply connected.
They also generalize results of Sansuc \cite{Sansuc}
on weak approximation for connected linear groups.

We could state and prove our results thanks to the notion
of a quasi-trivial group introduced by Colliot-Th\'el\`ene \cite{CT}.
The constructions and proofs are based on results of Kottwitz \cite{K}.
Of course, our results are based on the classical results of
Kneser, Harder, Chernousov, and Platonov on the Hasse principle
and weak approximation for simply connected semisimple groups.

\begin{acknowledgements}
This note was written when the author was visiting the Max-Planck-Institut f\"ur Mathematik,
Bonn (MPIM). The author is grateful to MPIM for hospitality, support,
and excellent working conditions.
The author is grateful to Boris Kunyavski\u\i\ for useful discussions
and for help in proving Theorem \ref{thm:Htor-metacyclic}.
\end{acknowledgements}


\section{Preliminaries on quasi-trivial groups}
The results of this section are actually due to \CT\ \cite{CT}.

\begin{subsec}
Let $k$ be a field of characteristic 0, $\kbar$ a fixed algebraic closure of $k$.
Let $G$ be a connected linear $k$-group.
We set $\Gbar=G\times_k \kbar$.
We use the following notation:
\pn $G\uu$ is the unipotent radical of $G$;
\pn $G\red=G/G\uu$ (it is reductive);
\pn $G\sss$ is the derived group of $G\red$ (it is semisimple);
\pn $G\tor=G\red/G\sss$ (it is a torus);
\pn $G\ssu=\ker[G\to G\tor]$ (it is an extension of $G\sss$ by $G\uu$).
\end{subsec}

\begin{definition}[\CT]
A connected linear $k$-group $G$ over a field $k$ of characteristic 0
is called \emph{quasi-trivial},
if $G\tor$ is a quasi-trivial torus and $G\sss$ is simply connected.
\end{definition}

Recall that a $k$-torus $T$ is called quasi-trivial
if its character group $\XX(\overline{T})$ is a permutation
$\Gal(\kbar/k)$-module.

Note that if $G$ is quasi-trivial, then for any field extension $K/k$
the group $G_K$ is quasi-trivial.

\begin{lemma}\label{lem:Pic}
Let $G$ be a quasi-trivial group over a field $k$ of characteristic 0.
Then $\Pic(G)=0$, where $\Pic$ denotes the Picard group.
\end{lemma}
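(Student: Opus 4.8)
\textit{Proof proposal.} The plan is to derive $\Pic(G)=0$ from the standard Galois descent exact sequence for the Picard group, which separates the computation into a ``geometric'' contribution $\Pic(\Gbar)$ and an ``arithmetic'' contribution $H^1\big(\Gal(\kbar/k),\XX(\Gbar)\big)$; each of the two is killed by one of the two defining properties of a quasi-trivial group.

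First I would recall, following Rosenlicht, that every invertible regular function on the connected linear $\kbar$-group $\Gbar$ is a nonzero constant times a character, so $\kbar[\Gbar]^\times\iso\kbar^\times\times\XX(\Gbar)$ as $\Gal(\kbar/k)$-modules, and that $\XX(\Gbar)=\XX(\Gbar\tor)$ because $\Gbar\uu$ and $\Gbar\sss$ have no nontrivial characters. Inserting this, together with Hilbert's Theorem~90, into the low-degree exact sequence of the Hochschild--Serre (Galois descent) spectral sequence for \'etale cohomology of $\Gm$ on $G$ gives an exact sequence
$$
0\longrightarrow H^1\big(\Gal(\kbar/k),\XX(\Gbar\tor)\big)\longrightarrow\Pic(G)\longrightarrow\Pic(\Gbar),
$$
valid for every connected linear $k$-group $G$. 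So it suffices to show that both end groups vanish.

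For the arithmetic term: since $G$ is quasi-trivial, $G\tor$ is a quasi-trivial torus, i.e.\ $\XX(\Gbar\tor)$ is a permutation $\Gal(\kbar/k)$-module, a finite direct sum of modules $\ZZ[\Gal(\kbar/k)/\Gamma_i]$ with the $\Gamma_i$ open subgroups. By Shapiro's lemma each summand contributes $H^1(\Gamma_i,\ZZ)=\Hom_{\mathrm{cont}}(\Gamma_i,\ZZ)=0$, the last equality because $\Gamma_i$ is profinite; hence $H^1\big(\Gal(\kbar/k),\XX(\Gbar\tor)\big)=0$. For the geometric term: $\Gbar$ is again quasi-trivial, and over $\kbar$ (characteristic $0$) a Levi decomposition $\Gbar\iso\Gbar\uu\rtimes\bar L$ with $\bar L\iso\Gbar\red$ and $\Gbar\uu$ isomorphic as a variety to an affine space yields $\Pic(\Gbar)=\Pic(\Gbar\red)$; finally, for the connected reductive $\kbar$-group $\Gbar\red$ the Picard group is canonically the character group of the fundamental group of its derived group $\Gbar\sss$ (the kernel of a simply connected cover; see \cite{CT}, \cite{Sansuc}), which is trivial since $\Gbar\sss$ is simply connected. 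Combining the three steps gives $\Pic(G)=0$.

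I expect no serious obstacle: the argument is an assembly of standard facts, and the only ingredient that is not pure formalism is the computation of the geometric Picard group of an arbitrary simply connected semisimple group (e.g.\ a spin or an exceptional group), which one quotes from the classical structure theory rather than proving by hand.
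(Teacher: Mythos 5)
Your proof is correct, but it takes a genuinely different route from the paper. The paper never passes to $\kbar$: it invokes Sansuc's exact sequence $\XX(G')\to\Pic(G'')\to\Pic(G)\to\Pic(G')$ for an extension $1\to G'\to G\to G''\to 1$ of connected $k$-groups and does a d\'evissage along $G\uu$, $G\sss$, $G\tor$, quoting from Sansuc that $\Pic(G\sss)=0$ for $G\sss$ simply connected over $k$ and that $\Pic(G\tor)=H^1(k,\XX(\Gbar\tor))$, which vanishes for a permutation module; the unipotent piece is handled, as in your argument, by the exponential isomorphism with affine space. You instead split off the arithmetic from the geometry at the outset via the Rosenlicht--Hochschild--Serre sequence $0\to H^1(\Gal(\kbar/k),\XX(\Gbar\tor))\to\Pic(G)\to\Pic(\Gbar)$, kill the first term by Shapiro's lemma and $\Hom_{\mathrm{cont}}(\Gamma_i,\ZZ)=0$, and the second by Levi decomposition plus the classical computation of the geometric Picard group of a reductive group with simply connected derived group. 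The two arguments consume essentially the same inputs (indeed, Sansuc's $\Pic(T)=H^1(k,\XX(\overline T))$ is itself proved by your descent sequence), but they package them differently: the paper's d\'evissage is shorter because it black-boxes the $k$-rational statements from Sansuc, while your version makes transparent exactly which of the two defining conditions of quasi-triviality kills which contribution, at the price of invoking the geometric structure theory ($\Pic$ of semisimple groups over $\kbar$) explicitly. Both are complete proofs.
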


\begin{proof}
If
$$
1\to G'\to G\to G''\to 1
$$
is a short exact sequence of connected linear $k$-groups,
then we have an exact sequence
\begin{equation}\label{eq:exact-groups}
\XX(G')\to\Pic(G'')\to\Pic(G)\to\Pic(G'),
\end{equation}
where $\XX(G')$ denotes the  group of $k$-characters of $G'$,
see \cite[Corollary 6.11]{Sansuc}.

Since $G\uu$ is a unipotent $k$-group,  the exponential map
${\rm exp}\colon {\rm Lie\,}G\uu\to G\uu$ is a biregular isomorphism
of algebraic varieties (because ${\rm char}(k)=0$),
hence $\Pic(G\uu)=0$.
By \cite[Lemme 6.9]{Sansuc} $\Pic(G\sss)=0$ (because $G\sss$ is simply connected)
and $\Pic(G\tor)=H^1(k,\XX(\Gbar\tor))$.
Since $\XX(\Gbar\tor)$ is a permutation module, we see that $\Pic(G\tor)=0$.
Using exact sequence \eqref{eq:exact-groups}, we conclude by d\'evissage
that $\Pic(G)=0$.
\end{proof}

\begin{lemma}\label{lem:Sh-A}
Let $G$ be a quasi-trivial $k$-group over a number field $k$.
Then $\Sh(G)=0$ and $A(G)=0$.
\end{lemma}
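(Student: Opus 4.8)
The plan is to reduce everything to the structure of a quasi-trivial group and to the known vanishing results for its building blocks. Recall that a quasi-trivial $k$-group $G$ fits, by definition, into the picture where $G\tor$ is a quasi-trivial torus, $G\sss$ is simply connected semisimple, and $G\uu$ is the unipotent radical. I would first dispose of $G\uu$: since $\mathrm{char}(k)=0$, the exponential map identifies $G\uu$ with an affine space, so $H^1(k_v,G\uu)=0$ for every place $v$ and $H^1(k,G\uu)=0$, and moreover unipotent groups trivially have weak approximation; hence it suffices to treat $G\red=G/G\uu$, and by the standard dévissage for $\Sh$ and for $A(\cdot)$ through the exact sequence $1\to G\uu\to G\to G\red\to 1$, both invariants coincide for $G$ and for $G\red$. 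So from now on I may assume $G$ is reductive with $G\sss$ simply connected and $G\tor$ quasi-trivial.

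Next I would exploit the exact sequence $1\to G\sss\to G\to G\tor\to 1$. For the simply connected factor, the classical theorems of Kneser–Harder–Chernousov give $\Sh(G\sss)=0$ (Hasse principle), and Platonov's theorem (together with the Kneser–Harder results at the archimedean places) gives $A(G\sss)=0$, i.e. $G\sss$ has weak approximation. For the quasi-trivial torus $G\tor$: being a product of tori of the form $\Res_{L/k}\Gm$, Shapiro's lemma reduces its cohomology to that of $\Gm$ over intermediate fields, whence $\Sh(G\tor)=0$ by Hilbert 90 plus the vanishing of $\Sh$ for $\Gm$, and $A(G\tor)=0$ since $\Res_{L/k}\Gm$ has weak approximation (the affine space/open-subset argument, or Sansuc).

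Finally I would push these through the sequence $1\to G\sss\to G\to G\tor\to 1$. Taking Galois cohomology over $k$ and over each $k_v$ and comparing, one gets a commutative diagram relating $H^1(\cdot,G\sss)$, $H^1(\cdot,G)$, $H^1(\cdot,G\tor)$; since the outer terms have vanishing $\Sh$ and the connecting maps into $H^2$ are controlled, a diagram chase yields $\Sh(G)=0$. For $A(G)$, one uses that weak approximation is inherited in the middle of an exact sequence when it holds for the kernel and the quotient, provided the relevant obstruction in $H^1$ vanishes locally—here again supplied by $\Sh(G\sss)=0$ and $A(G\sss)=0$. The main obstacle is precisely this last gluing step: unlike for abelian groups, the sequence of nonabelian $H^1$'s is only a sequence of pointed sets, so one must argue fibrewise (twisting $G\tor$-torsors by cocycles of $G$) and invoke that $H^1(k_v,G\sss)\to H^1(k_v,G)$ has trivial fibres or that the obstruction living in $H^2(k,\text{center})$-type groups vanishes; alternatively one simply cites \cite{CT}, as the paper notes that the results of this section are due to Colliot-Thélène. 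I would present the dévissage explicitly and refer to \cite{CT} and to Sansuc \cite{Sansuc} for the reductive case to keep the argument short.
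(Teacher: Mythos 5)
Your proposal is correct and follows essentially the same route as the paper: reduce to $G\red$ by d\'evissage through $1\to G\uu\to G\to G\red\to 1$ (the paper does this via Sansuc's Propositions 3.2 and 4.1), and then invoke Colliot-Th\'el\`ene's result that a reductive quasi-trivial group has $\Sh=0$ and $A=0$ (the paper cites \cite[Proposition 9.2]{CT} directly). Your additional sketch of the reductive case via $1\to G\sss\to G\to G\tor\to 1$ is a reasonable outline of what lies behind that citation, and you rightly flag the nonabelian gluing step as the point where one either does real work or simply cites \cite{CT}, which is what the paper does.
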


\begin{proof}
By \cite[Proposition 9.2]{CT}
we have $\Sh(G\red)=0$ and $A(G\red)=0$.
By \cite[Proposition 4.1]{Sansuc} $\Sh(G)=\Sh(G\red)$.
By \cite[Proposition 3.2]{Sansuc} $A(G)=A(G\red)$.
Thus $\Sh(G)=0$ and $A(G)=0$.
\end{proof}

\begin{lemma}\label{lem:repr-quasi-trivial}
Let $k$ be a field of characteristic 0 and $X$
a right homogeneous space with connected stabilizer over $k$,
i.e. $X=H'\bs G'$, where $G'$ is a connected linear $k$-group
and $H'\subset G'$ is a connected $k$-subgroup.
Then one can represent $X$ as $X=H\bs G$,
where $G$ is a quasi-trivial $k$-group and $H\subset G$ is a connected $k$-subgroup.
\end{lemma}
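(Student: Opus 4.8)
The goal is to replace the pair $(G',H')$ by a pair $(G,H)$ with $G$ quasi-trivial, without changing the homogeneous space $X$. The natural tool is a \emph{flasque resolution} of $G'$ in the sense of Colliot-Th\'el\`ene \cite{CT}: there is a central extension
\begin{equation*}
1\to F\to G\to G'\to 1
\end{equation*}
of connected linear $k$-groups, where $G$ is quasi-trivial and $F$ is a flasque $k$-torus (sitting inside the quasi-trivial torus $G\tor$ as the kernel of the map to $G'^{\,\tor}$). Here one should be a little careful: \CT\ works with reductive groups, so first one reduces the unipotent radical. One way: the extension by $G\uu$ is an affine-space bundle, so I can either carry $G\uu$ along or, more cleanly, lift a flasque resolution of $G'^{\,\rm red}$ through the surjection $G'\onto G'^{\,\rm red}$ using the fact that $G\uu$ is split. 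In any case one obtains $G$ quasi-trivial together with a surjection $\pi\colon G\to G'$ whose kernel $F$ is a central flasque torus.

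First I would form $G$ and $\pi\colon G\onto F\bs G = G'$ as above. Next, define $H=\pi^{-1}(H')\subset G$; this is a closed $k$-subgroup, and it is connected because it is an extension $1\to F\to H\to H'\to 1$ of the connected group $H'$ by the connected group $F$. Then I claim the natural map
\begin{equation*}
H\bs G \;\longrightarrow\; H'\bs G'
\end{equation*}
induced by $\pi$ is an isomorphism of $k$-varieties: it is $G$-equivariant for the action of $G$ on the target through $\pi$, it is surjective (as $\pi$ is surjective and the homogeneous spaces are orbits of the marked point), and its fibres are trivial because $\pi^{-1}(H')=H$, so set-theoretically it is a bijection; one then checks it is an isomorphism of varieties, e.g. by descent or by noting both sides are smooth and the map is a bijective morphism of homogeneous spaces of smooth groups in characteristic $0$, hence an isomorphism. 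Thus $X=H'\bs G'\cong H\bs G$ with $G$ quasi-trivial and $H$ connected, as required.

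The main obstacle is the very first step: producing the flasque resolution with $G$ quasi-trivial in the presence of a unipotent radical, i.e. making sure one really may take $G$ (not merely $G^{\,\rm red}$) quasi-trivial, and that $G\onto G'$ can be arranged with central flasque-torus kernel. Once the existence of $G\onto G'$ of this shape is granted — which is exactly the content of \CT's theory of flasque resolutions, applied to $G'^{\,\rm red}$ and then pulled back along $G'\onto G'^{\,\rm red}$ — the rest (connectedness of $H=\pi^{-1}(H')$ and the identification of the quotients) is routine d\'evissage. I would present the flasque resolution as a citation to \cite{CT} and spend the written proof on the two easy verifications: that $H$ is connected and that $\pi$ induces an isomorphism $H\bs G\isoto H'\bs G'$.
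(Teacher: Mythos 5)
Your proposal follows exactly the paper's argument: take a flasque resolution $1\to F\to G\to G'\to 1$ from \cite{CT}, let $H$ be the preimage of $H'$, observe $H$ is connected as an extension of $H'$ by $F$, and identify $H\bs G$ with $H'\bs G'$. The only superfluous worry is your detour through $G'^{\,\rm red}$: Colliot-Th\'el\`ene's Proposition-D\'efinition 3.1 already produces flasque resolutions for arbitrary \emph{connected linear} $k$-groups (not merely reductive ones), so the paper simply cites it for $G'$ directly.
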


\begin{proof}
By \cite[Proposition-D\'efinition 3.1]{CT}
there exists a flasque resolution of $G'$, i.e. a central extension of connected $k$-groups
$$
1\to F\to G\to G'\to 1,
$$
where $G$ is quasi-trivial and $F$ is a flasque $k$-torus.
Let $H$ be the preimage of $H'$ in $G$.
From the exact sequence
$$
1\to F\to H\to H'\to 1
$$
we see that $H$ is connected, because $H'$ and $F$ are connected.
We have $X=H\bs G$.
\end{proof}


\section{Defect of weak approximation}\label{sec:defect}

\begin{subsec}\label{subsec:repr-quasi-trivial}
Let $X$ be a homogeneous space with connected stabilizer over a number field $k$,
i.e. $X=H'\bs G'$, where $G'$ is a connected linear $k$-group and $H'\subset G'$
is a connected $k$-subgroup.
By Lemma \ref{lem:repr-quasi-trivial} we may write $X=H\bs G$, where $G$ is a quasi-trivial $k$-group
and $H\subset G$ is a connected $k$-subgroup.

By Lemma \ref{lem:Sh-A} $\Sh(G)=0$ and $A(G)=0$.
Therefore we can apply the results of \cite{B2}.
\end{subsec}

\begin{subsec}
Let $X,\ G,\ H$ be as in \ref{subsec:repr-quasi-trivial}.
Let $S\subset\sV$ be a finite subset.
Set
\begin{align*}
&B(H)=\Hom(\Pic(H),\QQ/\ZZ)=(\pi_1(H)_\Gamma)_{{\rm tors}}\\
&B_v(H)=B(H_{k_v}) \text{ for }v\in\sV
\end{align*}
with the notation of \cite{B2}.
Consider the canonical homomorphism
$$
\lambda_v\colon B_v(H)\to B(H).
$$
Set:
\begin{align*}
&B^S(H)=\langle\lambda_v(B_v(H))\rangle_{v\in \sV\smallsetminus S}\\
&B'(H)= B^\emptyset(H)= \langle\lambda_v(B_v(H))\rangle_{v\in \sV} \\
&C_S(H)= B'(H)/B^S(G),
\end{align*}
where $\langle\lambda_v(B_v(H))\rangle_{v\in \sV\smallsetminus S}$ denotes the subgroup of $B(H)$
generated by the subgroups $\lambda_v(B_v(H))$ for all $v\in\sV\smallsetminus S$.
\end{subsec}

\begin{subsec}
For a homogeneous space $X=H\bs G$ over $k$, without assuming that $G$ is quasi-trivial,
we defined in \cite{B2} the following groups:
\begin{align*}
&B(H,G)=\ker[B(H)\to B(G)],\\
&B_v(H,G)=B(H_{k_v},G_{k_v})=\ker[B_v(H)\to B_v(G)],
\end{align*}
and also $B^S(H,G)$, $B'(H,G)$, and $C_S(H,G)$, see \cite[Section 1.2]{B2}.
\end{subsec}

\begin{lemma}\label{lem:CSHG=CSH}
Let $k,\ X,\ G,\ H$ be as in \ref{subsec:repr-quasi-trivial}
(in particular $G$ is quasi-trivial).
Then there is a canonical isomorphism $C_S(H,G)\isoto C_S(H)$.
\end{lemma}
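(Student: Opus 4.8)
The plan is to show that the only difference between $C_S(H,G)$ and $C_S(H)$ is that the former is built from the subgroups $B_v(H,G) = \ker[B_v(H) \to B_v(G)]$ while the latter uses the full groups $B_v(H)$; so it suffices to prove that $B_v(H) \to B_v(G)$ is the zero map for every place $v$, and more precisely that $B(H_K) \to B(G_K)$ vanishes for every field extension $K/k$. Since $G$ is quasi-trivial, Lemma \ref{lem:Pic} gives $\Pic(G_K) = 0$ for every extension $K/k$ (recall $G_K$ is again quasi-trivial), hence $B(G_K) = \Hom(\Pic(G_K), \QQ/\ZZ) = 0$. Therefore $B_v(H,G) = \ker[B_v(H) \to 0] = B_v(H)$ for all $v$, and likewise $B(H,G) = B(H)$.

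Granting this, I would then unwind the definitions: applying $\lambda_v$ and taking the subgroup generated over $v \in \sV \smallsetminus S$ commutes with the identifications $B_v(H,G) = B_v(H)$, so $B^S(H,G) = B^S(H)$ as subgroups of $B(H,G) = B(H)$; taking $S = \emptyset$ gives $B'(H,G) = B'(H)$. Here one must be a little careful about the definition of $C_S(H)$ in the excerpt, which reads $C_S(H) = B'(H)/B^S(G)$ — I would read this (correcting the evident typo, matching the pattern $C_S(H,G) = B'(H,G)/B^S(H,G)$ from \cite{B2}) as $C_S(H) = B'(H)/B^S(H)$. Then the identity map on $B(H) = B(H,G)$ carries $B'(H,G)$ onto $B'(H)$ and $B^S(H,G)$ onto $B^S(H)$, inducing the desired canonical isomorphism $C_S(H,G) \isoto C_S(H)$ on quotients.

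The only real content is the vanishing $B(G_K) = 0$, which rests squarely on Lemma \ref{lem:Pic} together with the stability of quasi-triviality under field extension (already noted in the excerpt); everything else is a formal diagram chase through the definitions in \cite{B2}. The mildest subtlety is making sure the isomorphism is \emph{canonical} — i.e.\ induced by the natural map $B(H,G) \hookrightarrow B(H)$ of \cite{B2} rather than by an ad hoc choice — but since that natural inclusion is exactly the identity once $B(G) = 0$, this is immediate. I expect the write-up to be only a few lines: cite Lemma \ref{lem:Pic} to kill $\Pic(G_K)$, deduce $B_v(H,G) = B_v(H)$, and observe that all the derived subgroups and quotients are then literally equal.
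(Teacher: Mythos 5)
Your proposal is correct and follows essentially the same route as the paper: both arguments reduce everything to the vanishing $B(G)=B_v(G)=0$, obtained from Lemma \ref{lem:Pic} together with the fact that $G_{k_v}$ is again quasi-trivial, and then observe that all the derived groups $B(H,G)$, $B_v(H,G)$, $B^S(H,G)$, $B'(H,G)$ literally coincide with $B(H)$, $B_v(H)$, $B^S(H)$, $B'(H)$. Your reading of $C_S(H)=B'(H)/B^S(H)$ (correcting the typo $B^S(G)$) is also the intended one.
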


\begin{proof}
Since $G$ is quasi-trivial, by Lemma \ref{lem:Pic}
$\Pic(G)=0$, hence $B(G)=0$.
Since $G_{k_v}$ is also quasi-trivial, we see that $B_v(G)=0$.
We obtain successively that
$B(H,G)=B(H),\ B_v(H,G)=B_v(H),\ B^S(H,G)=B^S(H),\ B'(H,G)=B'(H)$,
whence $C_S(H,G)=C_S(H)$.
\end{proof}

\begin{theorem}\label{thm:WAS-CSH=0}
Let $k,\ X,\ G,\ H$ be as in \ref{subsec:repr-quasi-trivial}
(in particular $G$ is quasi-trivial).
Let $S\subset\sV$ be a finite set of places of $k$.
Then $X$ has $\WAS$ if and only if $C_S(H)=0$.
\end{theorem}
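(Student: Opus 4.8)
The plan is to reduce Theorem \ref{thm:WAS-CSH=0} to the already-established theory of \cite{B2} by using the two lemmas that precede it. Recall from \ref{subsec:repr-quasi-trivial} that, since $G$ is quasi-trivial, Lemma \ref{lem:Sh-A} gives $\Sh(G)=0$ and $A(G)=0$, which are precisely the hypotheses under which the main result of \cite{B2} applies. By \cite[Theorem 1.3]{B2} (or the analogous statement there), the variety $X=H\bs G$ has $\WAS$ if and only if $C_S(H,G)=0$: the group $C_S(H,G)$ was constructed in \cite{B2} exactly as the defect of weak approximation for $X$ with respect to $S$. This is the substantive input, and it is the one piece I would simply cite rather than reprove.

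Next I would invoke Lemma \ref{lem:CSHG=CSH}, which furnishes a canonical isomorphism $C_S(H,G)\isoto C_S(H)$; this used only that $B(G)=0$ and $B_v(G)=0$ for all $v$, both of which follow from $\Pic(G)=0$ (Lemma \ref{lem:Pic}) applied to $G$ and to each $G_{k_v}$ (which is again quasi-trivial). Combining the two displayed equivalences, $X$ has $\WAS$ $\iff$ $C_S(H,G)=0$ $\iff$ $C_S(H)=0$, which is the assertion. So the proof is essentially a two-line concatenation: cite \cite[Theorem 1.3]{B2} for the first equivalence and Lemma \ref{lem:CSHG=CSH} for the identification of the two defect groups.

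I do not anticipate a genuine obstacle here, since all the work has been front-loaded into the preliminary lemmas and into \cite{B2}. The only point requiring a modicum of care is making sure the hypotheses of the cited theorem of \cite{B2} are met in our setting — namely that $G$ is connected linear with $\Sh(G)=0$ and $A(G)=0$, and that $H\subset G$ is a connected $k$-subgroup — all of which are secured by Lemma \ref{lem:repr-quasi-trivial} and Lemma \ref{lem:Sh-A}. One should also note, for the record, that although the representation $X=H\bs G$ with $G$ quasi-trivial is not unique, the group $C_S(H)$ is independent of the choice because, as remarked in the introduction, it can be computed from the Brauer group of $X$ via \cite[Theorem 1.11]{B2}; this independence is not strictly needed for the proof of the stated equivalence but is worth a sentence. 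Thus the write-up will be short.
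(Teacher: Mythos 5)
Your proposal is correct and is essentially identical to the paper's own proof: both invoke Lemma \ref{lem:Sh-A} to verify the hypotheses $\Sh(G)=0$ and $A(G)=0$, then cite \cite[Theorem 1.3]{B2} for the equivalence of $\WAS$ with $C_S(H,G)=0$, and conclude via the isomorphism $C_S(H,G)\isoto C_S(H)$ of Lemma \ref{lem:CSHG=CSH}. No gaps.
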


\begin{proof}
By Lemma \ref{lem:Sh-A} $\Sh(G)=0$ and $A(G)=0$.
By \cite[Theorem 1.3]{B2} $X$ has $\WAS$ if and only if $C_S(H,G)=0$.
By Lemma  \ref{lem:CSHG=CSH} $\CSHG=\CSH$, and the theorem follows.
\end{proof}

\begin{lemma}\label{lem:Htor}
Let $H$ be a connected linear $k$-group over a number field $k$.
Assume that $H\tor=1$.
Then for any place $v$ of $k$
the map $\lambda_v\colon B_v(H)\to B(H)$ is surjective.
\end{lemma}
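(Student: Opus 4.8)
The plan is to trace through the definitions of $B_v(H)$ and $B(H)$ given in the preliminaries and exploit the hypothesis $H\tor = 1$ to see that these groups are controlled purely by the (co)character lattice data of a maximal torus, together with the semisimple part of $H$. Recall that $B(H) = (\pi_1(H)_\Gamma)_{\rm tors}$, where $\pi_1(H)$ is the algebraic fundamental group in the sense of \cite{B2} and $\Gamma = \Gal(\kbar/k)$; similarly $B_v(H) = (\pi_1(H)_{\Gamma_v})_{\rm tors}$ where $\Gamma_v$ is the decomposition group at $v$, and $\lambda_v$ is induced by the natural surjection $\pi_1(H)_{\Gamma_v} \onto \pi_1(H)_\Gamma$. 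The first step is therefore to recall that since $H\tor = 1$ we have $H\red = H\sss$ semisimple, and $\pi_1(H) = \pi_1(H\sss) = \pi_1(H\red)$ is a \emph{finite} abelian group (the kernel of the simply connected cover, with its $\Gamma$-action). Here passing through $H\uu$ and $H\ssu$ causes no change in $\pi_1$ since unipotent groups are simply connected in this sense.

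Once $\pi_1(H) = M$ is a finite $\Gamma$-module, the torsion condition becomes vacuous: $B(H) = M_\Gamma$ and $B_v(H) = M_{\Gamma_v}$, and the claim reduces to showing that $M_{\Gamma_v} \to M_\Gamma$ is surjective for a suitable choice of $v$, or rather that for any $v$ the image generates — but actually the cleanest route is to pick $v$ cleverly. So the key step is: choose a place $v$ of $k$ that splits completely in the finite Galois extension $L/k$ over which the action of $\Gamma$ on the finite module $M$ becomes trivial. Such $v$ exist in abundance by the Chebotarev density theorem. For such $v$, the decomposition group $\Gamma_v$ acts trivially on $M$ (indeed $\Gamma_v \subset \Gal(\kbar/L)$ up to conjugacy), so $M_{\Gamma_v} = M$, and the map $\lambda_v\colon M = M_{\Gamma_v} \to M_\Gamma$ is exactly the canonical projection $M \onto M_\Gamma$, which is surjective. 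Hence $\lambda_v$ is surjective for this $v$.

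Wait — the lemma asserts surjectivity of $\lambda_v$ for \emph{any} place $v$, not merely for a well-chosen one, so the argument above is insufficient and I must reconsider. The correct approach is: I expect $\pi_1(H)$ here to in fact be not just finite but such that the relevant comparison map is surjective unconditionally, which suggests the real content is that $B(H)$ is \emph{generated by a single $\lambda_v$-image already}, i.e., that $B'(H) = B(H)$, but that is a different (easier, already-noted) statement. So instead I would revisit whether the hypothesis forces $\pi_1(H)_\Gamma$ itself to be generated by the $\Gamma_v$-coinvariants for each individual $v$. The mechanism: for any $v$, the cokernel of $M_{\Gamma_v}\to M_\Gamma$ is the quotient of $M$ by the subgroup generated by $M^{?}$... concretely $\coker(M_{\Gamma_v}\to M_\Gamma) = M_\Gamma / \langle \text{image}\rangle$, and since $M_{\Gamma_v}\to M_\Gamma$ is surjective iff the augmentation ideal of $\Gamma$ acting on $M$ equals that of $\Gamma_v$ — which need not hold for all $v$. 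The hard part will therefore be isolating why the homogeneous-space/Brauer-group interpretation of $B(H,G)$ with $G$ quasi-trivial forces this: most likely one invokes \cite[Lemma 6.9]{Sansuc}-style computations showing $\Pic(H_{k_v}) \to \Pic(H)$ (dual to $\lambda_v$) is injective, equivalently that no nontrivial element of $\Pic(H)$ dies locally, which in turn follows because $\Pic(H)$ for $H$ with trivial toric part is built only from the semisimple part and behaves well under base change to $k_v$. The main obstacle is pinning down this local-global injectivity of Picard groups; once it is in hand, dualizing via $\Hom(-, \QQ/\ZZ)$ converts injectivity of $\Pic(H_{k_v})\to\Pic(H)$ into the desired surjectivity of $\lambda_v\colon B_v(H)\to B(H)$, and the proof is complete.
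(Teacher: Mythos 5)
You reduce correctly to the statement that, for the finite $\Gamma$-module $M=\pi_1(H)$ (finite because $H\tor=1$, so $\pi_1(H)=\pi_1(H\sss)$ is the finite kernel of the simply connected cover), the map $M_{\Gamma_v}\to M_\Gamma$ is surjective, and you correctly note that finiteness makes the torsion subscripts in $B(H)=(\pi_1(H)_\Gamma)_{\rm tors}$ and $B_v(H)=(\pi_1(H)_{\Gamma_v})_{\rm tors}$ vacuous. But you then talk yourself out of the (one-line) conclusion. The map of coinvariants $M_{\Gamma_v}\to M_\Gamma$ is surjective for \emph{every} $v$ and indeed for every subgroup $\Gamma'\subset\Gamma$ and every $\Gamma$-module $M$: writing $M_\Gamma=M/I_\Gamma M$ and $M_{\Gamma_v}=M/I_{\Gamma_v}M$ with $I_{\Gamma_v}M\subset I_\Gamma M$, the map in question is a further quotient map. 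Equivalently, the canonical surjection $M\onto M_\Gamma$ factors as $M\onto M_{\Gamma_v}\to M_\Gamma$, forcing the second arrow to be onto. Your claim that ``$M_{\Gamma_v}\to M_\Gamma$ is surjective iff the augmentation ideals agree, which need not hold for all $v$'' is false — that condition governs injectivity, not surjectivity. The only genuine content of the lemma is that finiteness of $\pi_1(H)$ removes the torsion issue: for general $H$, a surjection $A\onto B$ need not induce a surjection $A_{\rm tors}\to B_{\rm tors}$ (e.g.\ $\ZZ\onto\ZZ/2$), and that is exactly where the hypothesis $H\tor=1$ is used.

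Because of this misstep, your final paragraph abandons the working argument and replaces it with an unproved speculation about injectivity of $\Pic(H_{k_v}')\to\Pic(H')$-type maps, which is never established and is not needed. Your Chebotarev detour is likewise unnecessary. The paper itself only cites the proof of Theorem 3.4(b) of Colliot-Th\'el\`ene and Xu rather than giving an argument, so there is no internal proof to match; but the standard argument is precisely the two observations above (finiteness of $\pi_1(H)$ plus automatic surjectivity on coinvariants), both of which you had in hand before discarding them. Reinstating that one line closes the gap and completes the proof for all $v$, with no case distinction and no choice of a special place.
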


\begin{proof}
See \cite[Proof of Theorem 3.4(b)]{CTXu}.
\end{proof}

\begin{proposition}[{\cite[Theorem 1.4]{B1}}]\label{prop:Htor-B1}
Let $H$ be a connected $k$-group over a number field $k$.
Let $S\subset\sV$ be a finite set of places of $k$.
Then the canonical homomorphism $C_S(H)\to C_S(H\tor)$ is an isomorphism.
\end{proposition}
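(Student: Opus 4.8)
The plan is to reduce the statement about $H$ to the analogous statement for $H\tor$ by exploiting the exact sequence
$$
1\to H\ssu\to H\to H\tor\to 1,
$$
where $H\ssu=\ker[H\to H\tor]$ satisfies $(H\ssu)\tor=1$. The key functorial input is that the group $B(H)=(\pi_1(H)_\Gamma)_{\rm tors}$ and its local analogues $B_v(H)$ behave well under this sequence: since $\pi_1$ is an exact functor on the relevant class of groups (Kottwitz), one gets an exact sequence $\pi_1(H\ssu)\to\pi_1(H)\to\pi_1(H\tor)\to 1$, hence after taking coinvariants and torsion a compatible system of maps $B(H\ssu)\to B(H)\to B(H\tor)$, and similarly over each $k_v$. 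All the maps $\lambda_v$ are natural in $H$, so we get a commutative diagram relating the $B'$- and $B^S$-constructions for $H$ and for $H\tor$, and hence a canonical homomorphism $C_S(H)\to C_S(H\tor)$; that is the map whose bijectivity we must prove.

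First I would establish surjectivity. The map $\pi_1(H)\to\pi_1(H\tor)$ is surjective, hence so is $B(H)\to B(H\tor)$ (torsion and coinvariants are right-exact), and the same holds at each place $v$; chasing the definitions of $B'$ and $B^S$ (each is generated by images of the $\lambda_v$, and everything is compatible) gives surjectivity of $C_S(H)\to C_S(H\tor)$. The harder direction is injectivity, and this is where the real content lies: I must show that the image of $B(H\ssu)$ in $B(H)$ already lies in the subgroup $B^S(H)=\langle\lambda_v(B_v(H))\rangle_{v\notin S}$ — i.e. that the "extra" classes coming from the semisimple-unipotent part do not enlarge $C_S$. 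This is exactly the point at which Lemma \ref{lem:Htor} is used: applied to the group $H\ssu$ (which has trivial toric quotient), it says $\lambda_v\colon B_v(H\ssu)\to B(H\ssu)$ is surjective for \emph{every} $v$, in particular for $v\notin S$. Transporting this along $B(H\ssu)\to B(H)$ and using that $B_v(H\ssu)\to B_v(H)$ factors through $B_v(H)$ appropriately, one concludes that the image of $B(H\ssu)$ in $B(H)$ is contained in $B^S(H)$.

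Granting that, injectivity follows by a diagram chase: if $b\in B'(H)$ maps into $B^S(H\tor)$, lift a representative of its image to some $\lambda_v$-class with $v\notin S$, correct $b$ by this lift, and reduce to the case where $b$ maps to $0$ in $B(H\tor)$; then $b$ comes from $B(H\ssu)$, hence lies in $B^S(H)$ by the previous paragraph, so $b=0$ in $C_S(H)$. The main obstacle is this injectivity argument, and within it the only non-formal ingredient is the surjectivity of $\lambda_v$ for groups with trivial toric quotient (Lemma \ref{lem:Htor}); once that is in hand the rest is bookkeeping with the exact sequence $1\to H\ssu\to H\to H\tor\to 1$ and the right-exactness of the $\pi_1$-coinvariants-torsion functor, all of which are standard from \cite{B2} and \cite{K}. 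Since this is precisely \cite[Theorem 1.4]{B1}, I would in the write-up simply recall the argument in this form and refer to \cite{B1} for the details.
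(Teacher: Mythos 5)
Your proposal is correct and follows essentially the same route as the paper's proof: the same exact sequence $1\to H\ssu\to H\to H\tor\to 1$, the same use of Lemma \ref{lem:Htor} applied to $H\ssu$ to show that $\im[B(H\ssu)\to B(H)]$ already lies in $B^S(H)$, and the same concluding diagram chase (the paper phrases it as an application of the snake lemma). The only cosmetic difference is that you derive the exact sequence $B(H\ssu)\to B(H)\to B(H\tor)\to 0$ from exactness of Kottwitz's $\pi_1$, whereas the paper dualizes Sansuc's Picard-group exact sequence; these are equivalent by the definition $B(H)=\Hom(\Pic(H),\QQ/\ZZ)=(\pi_1(H)_\Gamma)_{\rm tors}$.
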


\begin{proof}
Since \cite{B1} is not easily accessible, we reproduce the proof here.

First, consider $H\ssu$.
Since $(H\ssu)\tor=1$,
by Lemma \ref{lem:Htor} for any place $v$ of $k$ we have $\lambda_v(B_v(H\ssu))=B(H\ssu)$.
We see that $B^S(H\ssu)=B'(H\ssu)=B(H\ssu)$.

Consider the canonical short exact sequence
$$
1\to H\ssu\to H\to H\tor\to 1.
$$
Exact sequence \eqref{eq:exact-groups} from the proof of Lemma \ref{lem:Pic}
gives us an exact sequence
$$
\XX(H\ssu)\to\Pic(H\tor)\to\Pic(H)\to\Pic(H\ssu),
$$
where clearly $\XX(H\ssu)=0$.
We obtain the dual exact sequence
$$
B(H\ssu)\to B(H)\to B(H\tor)\to 0
$$
and similar exact sequences for the groups $B_v$.
Since $B^S(H\ssu)=B(H\ssu)$,
we obtain an exact sequence
$$
B(H\ssu)\to B^S(H)\to B^S(H\tor)\to 0.
$$

Set $\Bbar=\im[B(H\ssu)\to B(H)]$, then we obtain an exact sequence
$$
0\to\Bbar\to B^S(H)\to B^S(H\tor)\to 0
$$
and a commutative diagram with exact rows
\[
\xymatrix{
0\ar[r]  &{\Bbar} \ar@{=}[d] \ar[r]  &B^S(H) \ar@{^{(}->}[d] \ar[r]  &B^S(H\tor)\ar@{^{(}->}[d] \ar[r]  &0\\
0\ar[r]  &{\Bbar}\ar[r]              &B'(H) \ar[r]                   &B'(H\tor) \ar[r]                  &0
}
\]
Now the snake lemma gives us an isomorphism $C_S(H)=B'(H)/B^S(H)\isoto B'(H\tor)/B^S(H\tor)=C_S(H\tor)$.
\end{proof}

\begin{corollary}\label{cor:Htor=1}
Let $k,\ X,\ G,\ H$ be as in \ref{subsec:repr-quasi-trivial}
(in particular $G$ is quasi-trivial and $H$ is connected).
Assume that $H\tor=1$.
Then $X$ has the weak approximation property.
\end{corollary}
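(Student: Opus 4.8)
The plan is to deduce the corollary directly from Theorem~\ref{thm:WAS-CSH=0} and Proposition~\ref{prop:Htor-B1}. I would fix a finite set of places $S\subset\sV$; by Theorem~\ref{thm:WAS-CSH=0} it suffices to prove that $\CSH=0$, and then to let $S$ range over all finite subsets of $\sV$.

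To see that $\CSH=0$, I would apply Proposition~\ref{prop:Htor-B1}: the canonical homomorphism $\CSH\to C_S(H\tor)$ is an isomorphism. By hypothesis $H\tor=1$, so the target is $C_S(1)$. Since $\Pic(1)=0$ one gets $B(1)=\Hom(\Pic(1),\QQ/\ZZ)=0$, and likewise $B_v(1)=0$ for every $v$, whence $B^S(1)=B'(1)=0$ and $C_S(1)=0$. Therefore $\CSH=0$.

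Alternatively, one can bypass Proposition~\ref{prop:Htor-B1}: since $H\tor=1$, Lemma~\ref{lem:Htor} shows that $\lambda_v\colon B_v(H)\to B(H)$ is surjective for every place $v$. Picking any $v\in\sV\smallsetminus S$ (which exists, $S$ being finite and $\sV$ infinite) already forces $B^S(H)=B(H)=B'(H)$, so $\CSH=B'(H)/B^S(H)=0$.

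In either case $\CSH=0$ for every finite $S\subset\sV$, so by Theorem~\ref{thm:WAS-CSH=0} the variety $X$ has $\WAS$ for all such $S$, i.e. $X$ has the weak approximation property. There is essentially no obstacle: the corollary is immediate once one records the trivial vanishing $C_S(1)=0$ (equivalently, the surjectivity furnished by Lemma~\ref{lem:Htor}); the substantive content has already been carried by Theorem~\ref{thm:WAS-CSH=0} and Proposition~\ref{prop:Htor-B1}.
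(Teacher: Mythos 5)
Your proof is correct and follows the paper's own argument exactly: the paper also deduces the corollary from Proposition~\ref{prop:Htor-B1} (giving $C_S(H)=C_S(H\tor)=0$) together with Theorem~\ref{thm:WAS-CSH=0}. Your alternative route via Lemma~\ref{lem:Htor} is also valid, but it is really just the first step of the proof of Proposition~\ref{prop:Htor-B1} specialized to this case, so it is not a genuinely different approach.
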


\begin{proof}
By Proposition \ref{prop:Htor-B1} we have $C_S(H)=C_S(H\tor)=0$ for any $S$.
By Theorem \ref{thm:WAS-CSH=0} $X$ has $\WAS$ for any $S$.
\end{proof}

\begin{remark*}
In the case when $G$ is semisimple simply connected,
this result was proved in \cite[Corollary 1.7]{B1}.
For a simple proof see \cite[Theorem 3.4(b)]{CTXu}.
\end{remark*}

The following result relates $C_S(H)$
to the Galois cohomology of $H\tor$.

\begin{proposition}\label{prop:torus}
Let $T$ be a $k$-torus over a number field $k$.
Let $S\subset\sV$ be a finite set of places of $k$.
Then there is a canonical isomorphism
$$
C_S(T)\isoto \coker\left[ H^1(k,T)\to\prod_{v\in S} H^1(k_v,T)\right].
$$
\end{proposition}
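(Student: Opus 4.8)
The plan is to translate everything into the Galois cohomology of the character module $M:=\XX(\overline T)$ and then apply arithmetic duality. First I would recall from \cite[Lemme 6.9]{Sansuc} that $\Pic(T_K)=H^1(K,M)$ for every field extension $K/k$. Since all the groups involved are finite, this yields canonical identifications
$$
B(T)=\Hom(H^1(k,M),\QQ/\ZZ),\qquad B_v(T)=\Hom(H^1(k_v,M),\QQ/\ZZ)\quad(v\in\sV),
$$
under which $\lambda_v\colon B_v(T)\to B(T)$ becomes the transpose of the restriction map $\mathrm{res}_v\colon H^1(k,M)\to H^1(k_v,M)$.

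Next comes a formal step. For a finite family of homomorphisms $f_i\colon A\to A_i$ of finite abelian groups one has $\sum_i\im(f_i^{\vee})=\bigl(\bigcap_i\ker f_i\bigr)^{\perp}$ in $\Hom(A,\QQ/\ZZ)$, where $(\,\cdot\,)^{\perp}$ denotes the annihilator. Applying this to $A=H^1(k,M)$ with the restriction maps, and writing $\Sh^1_W(M):=\ker[H^1(k,M)\to\prod_{v\in W}H^1(k_v,M)]$ for $W\subseteq\sV$, I obtain
$$
B^S(T)=\Sh^1_{\sV\smallsetminus S}(M)^{\perp},\qquad B'(T)=\Sh^1_{\sV}(M)^{\perp}
$$
inside $\Hom(H^1(k,M),\QQ/\ZZ)$. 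Since $\Sh^1_{\sV}(M)\subseteq\Sh^1_{\sV\smallsetminus S}(M)$, the standard identity ${A'}^{\perp}/{A''}^{\perp}\cong\Hom(A''/A',\QQ/\ZZ)$ for $A'\subseteq A''\subseteq A$ finite gives a canonical isomorphism
$$
C_S(T)=B'(T)/B^S(T)\ \isoto\ \Hom\bigl(\Sh^1_{\sV\smallsetminus S}(M)/\Sh^1_{\sV}(M),\ \QQ/\ZZ\bigr).
$$

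It then remains to identify $\Sh^1_{\sV\smallsetminus S}(M)/\Sh^1_{\sV}(M)$ with $\Hom\bigl(\coker[H^1(k,T)\to\prod_{v\in S}H^1(k_v,T)],\QQ/\ZZ\bigr)$; applying $\Hom(-,\QQ/\ZZ)$ once more then finishes the proof, all groups being finite. For the identification I would use two standard facts. The first is local Tate duality: the evaluation pairing $M\otimes T\to\Gm$ induces, for each $v$, a perfect pairing $H^1(k_v,M)\times H^1(k_v,T)\to\Br(k_v)\hookrightarrow\QQ/\ZZ$, hence a canonical isomorphism $H^1(k_v,M)\isoto\Hom(H^1(k_v,T),\QQ/\ZZ)$. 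The second is the Poitou--Tate sequence for $M$ and its Cartier dual torus $T=\underline{\Hom}(M,\Gm)$, from which I only need exactness of
$$
H^1(k,M)\ \longrightarrow\ \bigoplus_{v\in\sV}H^1(k_v,M)\ \longrightarrow\ \Hom(H^1(k,T),\QQ/\ZZ),
$$
the second map sending $(\eta_v)_v$ to $\xi\mapsto\sum_v\langle\mathrm{res}_v\xi,\eta_v\rangle$. Combining this with the reciprocity relation $\sum_v\langle\mathrm{res}_v\xi,\mathrm{res}_v\zeta\rangle=0$ (the class $\xi\cup\zeta$ coming from the global Brauer group), one checks that $\zeta\mapsto(\mathrm{res}_v\zeta)_{v\in S}$ identifies $\Sh^1_{\sV\smallsetminus S}(M)/\Sh^1_{\sV}(M)$ with the kernel of $\prod_{v\in S}H^1(k_v,M)\to\Hom(H^1(k,T),\QQ/\ZZ)$, and by local duality this kernel is $\Hom\bigl(\coker[H^1(k,T)\to\prod_{v\in S}H^1(k_v,T)],\QQ/\ZZ\bigr)$.

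Each step here is classical; the points I expect to require the most care are the bookkeeping at the archimedean places (working with the modified local cohomology and pairings, $\Br(k_v)$ being $\tfrac12\ZZ/\ZZ$ or $0$) and checking that the isomorphism assembled from these dualities is the canonical one. As a consistency check, embedding $T$ into a quasi-trivial torus $Q$ with quotient $T':=Q/T$ and using the exact sequence $1\to T\to Q\to T'\to1$ (so that $T\backslash Q=T'$ and $H^1(k,Q)=H^1(k_v,Q)=0$) exhibits both $C_S(T)$ (via Theorem \ref{thm:WAS-CSH=0}) and $\coker[H^1(k,T)\to\prod_{v\in S}H^1(k_v,T)]$ as the defect of weak approximation for $T'$.
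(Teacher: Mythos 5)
Your proposal is correct, but it runs the duality in the opposite direction from the paper. The paper stays on the torus side: it invokes local Tate duality as an isomorphism $\beta_v\colon H^1(k_v,T)\isoto B_v(T)$ and the global exact sequence $H^1(k,T)\to\oplus_v H^1(k_v,T)\to B(T)$ from \cite[Chapter I, Theorem 4.20(b)]{Milne}, and then identifies $\coker(\loc_S)$ directly with $B_S(T)/\bigl(B_S(T)\cap B^S(T)\bigr)\simeq B'(T)/B^S(T)=C_S(T)$ by an elementary argument about when a local family extends to an element of $\ker\mu$. You instead work on the character-module side: you first convert $C_S(T)$ by pure finite-group linear algebra into $\Hom\bigl(\Sh^1_{\sV\smallsetminus S}(\That)/\Sh^1_{\sV}(\That),\QQ/\ZZ\bigr)$, and then use the $\That$-side Poitou--Tate exactness $H^1(k,\That)\to\oplus_v H^1(k_v,\That)\to\Hom(H^1(k,T),\QQ/\ZZ)$ together with local duality and a second dualization to match this with the cokernel of $\loc_S$. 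Both routes rest on equivalent classical inputs; the paper's is slightly more economical (one application of duality, no double dual), while yours passes through exactly the group $\Sh^1_{S,\emptyset}(k,\That)$ and in effect reproves the isomorphism $\Ch^1_S(T)\simeq\Hom(\Sh^1_{S,\emptyset}(k,\That),\QQ/\ZZ)$ that the paper later imports from \cite[Theorem 4.2]{Schlank} in the proof of Theorem \ref{thm:Htor-metacyclic} --- so your argument would let one quote one less external result there. Two small points to make explicit if you write this up: the identity $\sum_i\im(f_i^{\vee})=\bigl(\bigcap_i\ker f_i\bigr)^{\perp}$ is applied to the infinite family $(\mathrm{res}_v)_{v\in\sV\smallsetminus S}$, which is harmless only because $H^1(k,\That)$ is finite; and the closing ``consistency check'' via $1\to T\to Q\to T'\to 1$ only shows the two groups vanish simultaneously, not that they are isomorphic, so it is rightly not part of the proof.
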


\begin{proof}
We have canonical  duality isomorphisms
$$
\beta_v\colon H^1(k_v,T)\isoto \Hom(H^1(k_v,\XX(\overline{T})),\QQ/\ZZ) =  B_v(T),
$$
cf. \cite[Chapter I, Corollary 2.3 and Theorem 2.13]{Milne}.
Moreover, we have an exact sequence
\begin{equation}\label{eq:exact-Milne}
H^1(k,T)\labelto{\loc} \oplus_{v\in\sV} H^1(k_v,T)\labelto{\mu}B(T),
\end{equation}
where $\loc$ is the localization map, $\mu((\xi_v)_{v\in\sV})=\sum\mu_v(\xi_v)$,
and $\mu_v$ is the composed map
$$
\mu_v\colon H^1(k_v,T)\labelto{\beta_v} B_v(T)\labelto{\lambda_v} B(T),
$$
cf. \cite[Chapter I, Theorem 4.20(b)]{Milne}.

Consider the localization map $\loc_S\colon H^1(k,T)\to\prod_{v\in S}H^1(k_v,T)$.
Let $\xi_S=(\xi_v)_{v\in S}\in \prod_{v\in S}H^1(k_v,T)=\oplus_{v\in S}B_v(T)$,
where we identify $H^1(k_v,T)$ with $B_v(T)$ using $\beta_v$.
From exact sequence \eqref{eq:exact-Milne} we see that $\xi_S\in\im\ \loc_S$
if and only if there exists an element $\xi^S\in \oplus_{v\notin S}B_v(T)$
such that $\mu(\xi_S,\xi^S)=0$.
Such an element $\xi^S$ exists if and only if
$$
\sum_{v\in S}\mu_v(\xi_v)\in B^S(T)\subset B(T).
$$

Set $B_S(T)=\langle\lambda_v(B_v(T))\rangle_{v\in S}$.
Then we see that there is a canonical isomorphism
\begin{align*}
\coker\left[ H^1(k,T)\to \prod_{v\in S} H^1(k_v,T)\right]\isoto B_S(T)/\left(B_S(T)\cap B^S(T)\right)\simeq&\\
    \simeq\left(B_S(T)+B^S(T)\right)/B^S(T)=B'(T)/B^S(T)=&C_S(T).
\end{align*}
\end{proof}

\begin{proposition}\label{prop:torus-S0}
Let $T$ be a $k$-torus over a number field $k$.
Let $L/k$ be a Galois extension splitting $T$.
Let $S_0$ be the set of (nonarchimedean, ramified in $L$)
places $v$ of $k$ whose decomposition groups in $\Gal(L/k)$ are noncyclic.
Let $S\subset \sV$ be any finite set of places of $k$.
Then the canonical homomorphism $C_S(T)\to C_{S\cap S_0}(T)$ is an isomorphism.
\end{proposition}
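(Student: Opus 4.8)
The plan is to reduce the statement to a computation of the subgroups $\lambda_v(B_v(T))$ and then to invoke the isomorphism of Proposition~\ref{prop:torus}. By that proposition, $C_S(T)\isoto\coker[H^1(k,T)\to\prod_{v\in S}H^1(k_v,T)]$, so it suffices to show that in the cokernel only the places in $S\cap S_0$ contribute; equivalently, working on the Brauer-group side, that $B'(T)=\langle\lambda_v(B_v(T))\rangle_{v\in\sV}$ is already generated by the $\lambda_v(B_v(T))$ with $v\notin S_0$ (together with $v\in S_0\cap S$), and correspondingly $B^S(T)$ and $B^{S\cap S_0}(T)$ differ only by subgroups coming from places outside $S_0$. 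So the first step is to establish the key local fact: \emph{if $v$ is a place of $k$ whose decomposition group $\Gamma_v=\Gal(L_w/k_v)$ in $\Gal(L/k)$ is cyclic, then $\lambda_v(B_v(T))$ is contained in the subgroup of $B(T)$ generated by $\lambda_{v'}(B_{v'}(T))$ over all $v'\neq v$.} In fact one wants the sharper statement that for $v\notin S_0$ the local contribution is redundant.

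The heart of the argument is this local redundancy for places with cyclic decomposition group. I would prove it by a Chebotarev-type argument: since $B_v(T)=\Hom(H^1(k_v,\XX(\overline T)),\QQ/\ZZ)$ depends only on the $\Gamma_v$-module $\XX(\overline T)$, and since $H^1$ of a cyclic group is computed by the Herbrand quotient / Tate cohomology periodicity, the image $\lambda_v(B_v(T))$ depends (via the restriction–corestriction machinery and the fact that $\Gamma_v$ is cyclic) only on the conjugacy class of $\Gamma_v$ as a subgroup of $\Gal(L/k)$. By Chebotarev there are infinitely many places $v'$ (in particular ones unramified in $L$, and outside any prescribed finite set such as $S$) whose Frobenius generates a conjugate of $\Gamma_v$; for such $v'$ one has $\Gamma_{v'}$ conjugate to $\Gamma_v$ and hence $\lambda_{v'}(B_{v'}(T))=\lambda_v(B_v(T))$ inside $B(T)$. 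Choosing such a $v'$ different from $v$ and from all places in $S$ shows that $\lambda_v(B_v(T))$ can be discarded from the generating set $\{\lambda_{v'}(B_{v'}(T))\}$ without changing the subgroup generated, \emph{and} that this is compatible with removing $S$: it shows $B^S(T)=\langle\lambda_v(B_v(T))\rangle_{v\notin S}$ is unchanged if we further restrict to $v\notin S\cup S_0$, and likewise $B'(T)=\langle\lambda_v(B_v(T))\rangle_{v\in\sV}=\langle\lambda_v(B_v(T))\rangle_{v\notin S_0}+\sum_{v\in S_0}\lambda_v(B_v(T))$.

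With the local fact in hand the conclusion is a direct comparison of subgroups of the finite group $B(T)$. One has
$$
B'(T)=\Bigl\langle\lambda_v(B_v(T))\Bigr\rangle_{v\notin S_0}+\sum_{v\in S\cap S_0}\lambda_v(B_v(T))+\sum_{v\in S_0\smallsetminus S}\lambda_v(B_v(T)),
$$
and each summand $\lambda_v(B_v(T))$ with $v\in S_0\smallsetminus S$ is not automatically redundant, so one must instead argue at the level of the quotient. The clean way: the natural surjection $C_S(T)=B'(T)/B^S(T)\onto C_{S\cap S_0}(T)=B'(T)/B^{S\cap S_0}(T)$ is an isomorphism iff $B^S(T)=B^{S\cap S_0}(T)$; but $B^{S\cap S_0}(T)=\langle\lambda_v(B_v(T))\rangle_{v\notin S\cap S_0}=\langle\lambda_v(B_v(T))\rangle_{v\notin S}+\sum_{v\in S\smallsetminus S_0}\lambda_v(B_v(T))=B^S(T)+\sum_{v\in S\smallsetminus S_0}\lambda_v(B_v(T))$, and by the local fact each $\lambda_v(B_v(T))$ with $v\in S\smallsetminus S_0$ (cyclic decomposition group) lies in $\langle\lambda_{v'}(B_{v'}(T))\rangle_{v'\neq v}$, which after choosing the witnesses $v'$ outside $S$ as above gives $\lambda_v(B_v(T))\subset B^S(T)$. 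Hence $B^{S\cap S_0}(T)=B^S(T)$ and the homomorphism is an isomorphism.

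I expect the main obstacle to be making precise and correct the assertion that $\lambda_v(B_v(T))$ depends only on the conjugacy class of the decomposition group when that group is cyclic — this requires care with the restriction/corestriction maps relating $B_v(T)=\Hom(H^1(k_v,\XX(\overline T)),\QQ/\ZZ)$ to $B(T)=\Hom(\Pic(T),\QQ/\ZZ)$ and with the Poitou--Tate/Milne duality used in Proposition~\ref{prop:torus}, together with invoking Chebotarev to produce witness places avoiding $S$. Once that local input is pinned down, the rest is the elementary diagram chase with finite abelian groups sketched above.
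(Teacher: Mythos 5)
Your proposal is correct and follows essentially the same route as the paper: identify that $\lambda_v(B_v(T))$ depends only on the conjugacy class of the decomposition group, use Chebotarev to replace each $v\in S\smallsetminus S_0$ (cyclic decomposition group) by a witness $v'\notin S$ with the same decomposition group, and conclude $B^{S}(T)=B^{S\cap S_0}(T)$, hence $C_S(T)=C_{S\cap S_0}(T)$. The only difference is that the paper justifies the conjugacy-class dependence directly from $\Pic(T_{k_v})=H^1(D_w,\XX(T_L))$ (Sansuc, Lemme 6.9) without needing cyclicity of $D_w$ — cyclicity is used solely to invoke Chebotarev — whereas you fold it into the restriction--corestriction discussion; this does not affect correctness.
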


\begin{proof}
Let $v\in S$.
Let $w$ be a place of $L$ lying over $v$.
Let $D_w\subset\Gal(L/k)$ be the decomposition group of $w$.
Then by \cite[Lemme 6.9]{Sansuc} $\Pic(T_{k_v})=H^1(D_w,\XX(T_L)$.
We see that the image $\lambda_v(B_v(T))\subset B(T)$
depends only on the conjugacy class of $D_w\subset\Gal(L/k)$.

If $v\in S$, $v\notin S_0$, then $D_w$ is cyclic for $w$ lying over $v$.
By Chebotarev's density theorem there exists $v'\notin S$
and $w'$ lying over $v'$ such that $D_{w'}=D_w$.
It follows that $\lambda_v(B_v(T))=\lambda_{v'}(B_{v'}(T))$.
But $v'\notin S$, hence $\lambda_{v'}(B_{v'}(T))\subset B^S(T)$.
We see that $\lambda_{v}(B_{v}(T))\subset B^S(T)$.
Thus $B^{S\cap S_0}(T)=B^S(T)$.
We conclude that $C_{S\cap S_0}(T)=C_{S}(T)$.
\end{proof}

\begin{corollary}\label{cor:S0}
Let $H$ be a connected linear $k$-group
over a number field $k$.
Let $L/k$ be a Galois extension splitting $H\tor$.
Let $S_0$ be the set of 
places $v$ of $k$ whose decomposition groups in $\Gal(L/k)$ are noncyclic.
Let $S\subset \sV$ be any finite set of places of $k$.
Then the canonical homomorphism $C_S(H)\to C_{S\cap S_0}(H)$ is an isomorphism.
\end{corollary}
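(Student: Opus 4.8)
The plan is to reduce everything to the torus $T=H\tor$ and then quote Proposition \ref{prop:torus-S0}. The first step is to check that the set $S_0$ in the statement is exactly the set $S_0$ of Proposition \ref{prop:torus-S0} applied to $T=H\tor$ with the splitting field $L$. Indeed, a place $v$ whose decomposition group in $\Gal(L/k)$ is noncyclic is automatically nonarchimedean and ramified in $L$: an archimedean place has decomposition group of order at most $2$, and a nonarchimedean place unramified in $L$ has decomposition group generated by a Frobenius element, so in either case the decomposition group is cyclic. Hence the two descriptions of $S_0$ coincide, and $L/k$ does split $H\tor$ by hypothesis, so Proposition \ref{prop:torus-S0} is applicable to $T=H\tor$.

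Next I would apply Proposition \ref{prop:Htor-B1} to the finite sets $S$ and $S\cap S_0$, obtaining canonical isomorphisms $C_S(H)\isoto C_S(H\tor)$ and $C_{S\cap S_0}(H)\isoto C_{S\cap S_0}(H\tor)$, and Proposition \ref{prop:torus-S0} to get that $C_S(H\tor)\to C_{S\cap S_0}(H\tor)$ is an isomorphism. It then remains to fit these into the commutative square
\[
\xymatrix{
C_S(H) \ar[r] \ar[d] & C_S(H\tor) \ar[d] \\
C_{S\cap S_0}(H) \ar[r] & C_{S\cap S_0}(H\tor)
}
\]
in which the horizontal arrows are the isomorphisms of Proposition \ref{prop:Htor-B1} and the right-hand vertical arrow is the isomorphism of Proposition \ref{prop:torus-S0}; since three of the four arrows are isomorphisms, so is the fourth, namely the canonical homomorphism $C_S(H)\to C_{S\cap S_0}(H)$, which is what we want.

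Commutativity of the square is essentially built into the constructions, and verifying it is the only step requiring a line of care (there is no genuine obstacle). Unwinding the proof of Proposition \ref{prop:Htor-B1}: the isomorphism $C_S(H)\isoto C_S(H\tor)$ is induced by the surjection $B(H)\onto B(H\tor)$ dual to $\Pic(H\tor)\into\Pic(H)$, which restricts to surjections $B'(H)\onto B'(H\tor)$, $B^S(H)\onto B^S(H\tor)$ and $B^{S\cap S_0}(H)\onto B^{S\cap S_0}(H\tor)$; the vertical arrows are induced by the inclusions $B^S\subseteq B^{S\cap S_0}$ of subgroups of $B'$ (valid since $\sV\smallsetminus S\subseteq\sV\smallsetminus(S\cap S_0)$). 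These two families of maps are manifestly compatible, so the square commutes, and the corollary follows formally.
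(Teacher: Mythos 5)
Your proposal is correct and follows exactly the paper's own argument: the same commutative square with horizontal isomorphisms from Proposition \ref{prop:Htor-B1} and the right vertical isomorphism from Proposition \ref{prop:torus-S0}. The extra checks you include (that the two descriptions of $S_0$ agree and that the square commutes) are sound and only make explicit what the paper leaves implicit.
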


\begin{proof}
We have a commutative diagram of canonical homomorphisms
$$
\xymatrix{
C_S(H)\ar[d]\ar[r]^\simeq          &C_S(H\tor)\ar[d]^\simeq\\
C_{S\cap S_0}(H)\ar[r]^-\simeq      &C_{S\cap S_0}(H\tor)
}
$$
By Proposition \ref{prop:Htor-B1} the horizontal arrows are isomorphisms.
By Proposition \ref{prop:torus-S0} the right vertical arrow is an isomorphism.
We conclude that the left vertical arrow is also an isomorphism.
\end{proof}

\begin{theorem}\label{thm:ScapS0=emptyset}
Let $k,\ X,\ G,\ H$ be as in \ref{subsec:repr-quasi-trivial}
(in particular $G$ is quasi-trivial and $H$ is connected).
Let $L/k$ be a Galois extension splitting $H\tor$.
Let $S_0$ be the set of 
places $v$ of\ $k$ whose decomposition groups in $\Gal(L/k)$ are noncyclic.
Let $S\subset\sV$ be a finite set of places of $k$ such that $S\cap S_0=\emptyset$.
Then $X$ has $\WAS$.
\end{theorem}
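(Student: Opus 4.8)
The plan is to reduce the statement, via the machinery already established, to the tautological vanishing $C_\emptyset(H)=0$. First I would invoke Theorem \ref{thm:WAS-CSH=0}: since $G$ is quasi-trivial, $X$ has $\WAS$ if and only if $C_S(H)=0$. Hence it suffices to prove that $C_S(H)=0$ under the hypothesis $S\cap S_0=\emptyset$.

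Next I would apply Corollary \ref{cor:S0}, according to which the canonical homomorphism $C_S(H)\to C_{S\cap S_0}(H)$ is an isomorphism. By assumption $S\cap S_0=\emptyset$, so this gives $C_S(H)\isoto C_\emptyset(H)$. Finally I would observe that $C_\emptyset(H)=0$ straight from the definition: one has $C_S(H)=B'(H)/B^S(H)$ with $B'(H)=B^\emptyset(H)$, so specializing to $S=\emptyset$ yields $C_\emptyset(H)=B'(H)/B'(H)=0$. Combining the three steps gives $C_S(H)=0$, and therefore $X$ has $\WAS$, as required.

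There is in fact no real obstacle in this argument: all the substance has been absorbed into the results it rests on. The reduction $C_S(H)\cong C_{S\cap S_0}(H)$ of Corollary \ref{cor:S0} is where the work lies, since it depends on Proposition \ref{prop:torus-S0} (Chebotarev's density theorem, applied after the reduction to the torus $H\tor$ furnished by Proposition \ref{prop:Htor-B1}), while the equivalence ``$\WAS$ $\Leftrightarrow$ $C_S(H)=0$'' rests on \cite[Theorem 1.3]{B2} together with Lemmas \ref{lem:Sh-A} and \ref{lem:CSHG=CSH}. The only genuinely new input here is the trivial remark that omitting no place from the generating set recovers all of $B'(H)$; once that is noted, the theorem follows formally.
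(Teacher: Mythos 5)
Your proposal is correct and follows exactly the paper's own argument: apply Corollary \ref{cor:S0} to identify $C_S(H)$ with $C_{S\cap S_0}(H)=C_\emptyset(H)$, note that $C_\emptyset(H)=B'(H)/B^\emptyset(H)=0$ by definition, and conclude via Theorem \ref{thm:WAS-CSH=0}. Nothing is missing.
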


\begin{proof}
By Corollary \ref{cor:S0}  $C_S(H)=C_{S\cap S_0}(H)=C_\emptyset(H)=0$.
By Theorem \ref{thm:WAS-CSH=0} $X$ has $\WAS$.
\end{proof}

\begin{corollary}\label{cor:real-approximation}
Let $X$ be as in \ref{subsec:repr-quasi-trivial}.
Then $X$ has the real approximation property.
\end{corollary}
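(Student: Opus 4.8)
The plan is to obtain this as an immediate consequence of Theorem~\ref{thm:ScapS0=emptyset}, applied with $S=\Vinf$. Writing $X=H\bs G$ with $G$ quasi-trivial and $H$ connected as in \ref{subsec:repr-quasi-trivial}, I would first fix a finite Galois extension $L/k$ splitting the torus $H\tor$ (such an extension always exists), and let $S_0$ be the associated set of places of $k$ whose decomposition groups in $\Gal(L/k)$ are noncyclic, exactly as in Theorem~\ref{thm:ScapS0=emptyset}.

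The one point to verify is that $\Vinf\cap S_0=\emptyset$, i.e. that every archimedean place of $k$ has cyclic decomposition group in $\Gal(L/k)$. This is elementary: the decomposition group of a complex place is trivial, and that of a real place is either trivial or generated by a single complex conjugation, so in all cases it is cyclic (indeed of order at most $2$). Hence $S=\Vinf$ satisfies the hypothesis $S\cap S_0=\emptyset$ of Theorem~\ref{thm:ScapS0=emptyset}, and that theorem yields that $X$ has weak approximation with respect to $\Vinf$, which is by definition the real approximation property.

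I do not expect any genuine obstacle here: all the substantive work has already been carried out in Theorem~\ref{thm:ScapS0=emptyset} (and, behind it, in Corollary~\ref{cor:S0} and the vanishing $C_\emptyset(H)=0$); the corollary is merely the specialization $S=\Vinf$ combined with the observation that decomposition groups of archimedean places are cyclic.
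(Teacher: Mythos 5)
Your proposal is correct and coincides with the paper's own (first) proof: apply Theorem~\ref{thm:ScapS0=emptyset} with $S=\Vinf$, observing that archimedean decomposition groups are trivial or of order $2$, hence cyclic, so $\Vinf\cap S_0=\emptyset$. (The paper also records a second proof via \cite[Corollary 1.7]{B2}, but that is an alternative, not the main argument.)
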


\begin{proof}
Let $L$ and $S_0$ be as in Theorem \ref{thm:ScapS0=emptyset}.
Take $S=\Vinf$.
A decomposition group of an archimedean place is either 0 or $\ZZ/2$,
hence cyclic.
We see that $\Vinf\cap S_0=\emptyset$.
By Theorem \ref{thm:ScapS0=emptyset} $X$ has $({\rm WA}_\Vinf)$,
i.e. $X$ has the real approximation property.
\end{proof}

\begin{proof}[Another proof]
The subgroup $H$ is connected, and we have $\Sh(G)=0$ and $A(G)=0$.
Now by \cite[Corollary 1.7]{B2} $X$ has the real approximation property.
\end{proof}

\begin{corollary}\label{cor:Htor-splits-cyclic}
Let $k,\ X,\ G,\ H$ be as in \ref{subsec:repr-quasi-trivial}
(in particular $G$ is quasi-trivial and $H$ is connected).
Assume that $H\tor$ splits over a \emph{cyclic} extension $L$ of $k$.
Then $X$ has the weak approximation property.
\end{corollary}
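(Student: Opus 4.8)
The plan is to deduce this immediately from Theorem~\ref{thm:ScapS0=emptyset}. By Theorem~\ref{thm:WAS-CSH=0}, showing that $X$ has the weak approximation property is the same as showing that $C_S(H)=0$ for every finite subset $S\subset\sV$. So fix the cyclic extension $L/k$ splitting $H\tor$, and let $S_0$ be the set of (nonarchimedean, ramified in $L$) places of $k$ whose decomposition group in $\Gal(L/k)$ is noncyclic, exactly as in Corollary~\ref{cor:S0} and Theorem~\ref{thm:ScapS0=emptyset}.

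The only point to check is that $S_0=\emptyset$. Since $\Gal(L/k)$ is cyclic by hypothesis, and every subgroup of a cyclic group is cyclic, the decomposition group $D_w\subset\Gal(L/k)$ of any place $w$ of $L$ is cyclic. Hence no place of $k$ has a noncyclic decomposition group, i.e.\ $S_0=\emptyset$.

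Consequently, for every finite $S\subset\sV$ we have $S\cap S_0=\emptyset$, and Theorem~\ref{thm:ScapS0=emptyset} yields that $X$ has $\WAS$. (Equivalently, one can argue directly: by Corollary~\ref{cor:S0} there is a canonical isomorphism $C_S(H)\cong C_{S\cap S_0}(H)=C_\emptyset(H)$, and $C_\emptyset(H)=B'(H)/B^\emptyset(H)=0$ is immediate from the definition $B'(H)=B^\emptyset(H)$.) Since $S$ was arbitrary, $X$ has the weak approximation property.

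I do not expect a genuine obstacle here: once Theorem~\ref{thm:ScapS0=emptyset} and the machinery behind it (Proposition~\ref{prop:Htor-B1}, Proposition~\ref{prop:torus-S0}, together with Chebotarev's density theorem) are in place, the corollary is purely formal, the sole new input being the elementary observation that subgroups of cyclic groups are cyclic. The case $H\tor=1$ is then also recovered, since the trivial torus is split over $k$ itself.
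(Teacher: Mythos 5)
Your proposal is correct and follows exactly the paper's own argument: observe that every subgroup of the cyclic group $\Gal(L/k)$ is cyclic, so $S_0=\emptyset$, and then apply Theorem~\ref{thm:ScapS0=emptyset}. The extra remarks (the direct computation $C_\emptyset(H)=0$ and the recovery of the case $H\tor=1$) are accurate but not needed.
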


\begin{proof}
Let $S_0$ denote the set of
places $v$ of $k$ whose decomposition groups in $\Gal(L/k)$ are noncyclic,
then $S_0=\emptyset$.
Thus for any finite $S\subset \sV$ we have $S\cap S_0=\emptyset$.
By Theorem \ref{thm:ScapS0=emptyset}  $X$ has $\WAS$ for any $S$,
i.e. $X$ has the weak approximation property.
\end{proof}


\section{Metacyclic extensions}

In this section, inspired by \cite[Lemme 1.3]{Sansuc},
we generalize Corollary \ref{cor:Htor-splits-cyclic}.

\begin{subsec}
Recall that a finite group is called metacyclic if
all its Sylow subgroups are cyclic.
For example, the symmetric group $S_3$ is metacyclic,
while the group $\ZZ/2\oplus\ZZ/2$ is not.
Every cyclic group is metacyclic.
We say that a Galois extension $L/k$ is metacyclic
if $\Gal(L/k)$ is a metacyclic group.
\end{subsec}

\begin{theorem}\label{thm:Htor-metacyclic}
Let $k,\ X,\ G,\ H$ be as in \ref{subsec:repr-quasi-trivial}
(in particular $G$ is quasi-trivial and $H$ is connected).
Assume that $H\tor$ splits over a \emph{metacyclic} extension $L$ of $k$.
Then $X$ has the weak approximation property.
\end{theorem}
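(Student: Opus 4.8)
The plan is to reduce to the case already handled by Theorem~\ref{thm:ScapS0=emptyset} and Corollary~\ref{cor:Htor-splits-cyclic} by a dévissage over the Sylow subgroups of $\Gamma=\Gal(L/k)$. By Theorem~\ref{thm:WAS-CSH=0} it suffices to show $C_S(H)=0$ for every finite $S\subset\sV$, and by Proposition~\ref{prop:Htor-B1} this is equivalent to $C_S(H\tor)=0$. So from now on write $T=H\tor$, a $k$-torus split by the metacyclic extension $L$. By Corollary~\ref{cor:S0} (applied to $T$, via Proposition~\ref{prop:torus-S0}) we may replace $S$ by $S\cap S_0$, where $S_0$ is the set of nonarchimedean places ramified in $L$ whose decomposition group in $\Gamma$ is noncyclic; thus we must rule out such places contributing, i.e.\ show that for $v\in S_0$ the image $\lambda_v(B_v(T))$ already lies in $B^S(T)$, and more precisely that $B'(T)=B^S(T)$.

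The key point, following \cite[Lemme 1.3]{Sansuc}, is that for a metacyclic group $\Gamma$ every subgroup $D\subseteq\Gamma$ is generated by its Sylow subgroups, each of which is cyclic. Concretely, if $D=D_w$ is the decomposition group of a place $w\mid v$, choose for each prime $p\mid |D|$ a Sylow $p$-subgroup $D_p\subseteq D$; since each $D_p$ is cyclic, by Chebotarev's density theorem there is a place $v_p\notin S$ with a place $w_p\mid v_p$ whose decomposition group equals $D_p$ (up to conjugacy). As recalled in the proof of Proposition~\ref{prop:torus-S0}, $\lambda_v(B_v(T))\subseteq B(T)$ depends only on the conjugacy class of $D_w$; moreover $\Pic(T_{k_v})=H^1(D_w,\XX(T_L))$, and for a finite group $D$ the restriction maps $H^1(D,M)\to\bigoplus_p H^1(D_p,M)$ to Sylow subgroups are injective (standard corestriction–restriction argument). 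One then checks, via the commutative diagrams relating $\lambda_v$ for $v$ and $v_p$ (the inclusion $D_p\hookrightarrow D$ inducing a corestriction on the $B$-side), that $\lambda_v(B_v(T))\subseteq\sum_p\lambda_{v_p}(B_{v_p}(T))$. Since each $v_p\notin S$, the right-hand side lies in $B^S(T)$, so $\lambda_v(B_v(T))\subseteq B^S(T)$ for all $v$, whence $B'(T)=B^S(T)$ and $C_S(T)=0$.

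I expect the main obstacle to be the second sentence of the previous paragraph made precise: identifying the correct functoriality on the $B$-side. The groups $B_v(T)=\Hom(H^1(D_w,\XX(T_L)),\QQ/\ZZ)$ are Pontryagin duals, so the restriction maps $H^1(D)\to H^1(D_p)$ dualize to maps $B_{v_p}\to B_v$ that one must match up with the natural homomorphisms $\lambda_{v_p}$ and $\lambda_v$ into the global $B(T)=\Hom(\Pic(\overline T),\QQ/\ZZ)$; the compatibility amounts to the fact that the diagram of $D_p\hookrightarrow D\hookrightarrow\Gamma$ induces the obvious triangle on $H^1(-,\XX(T_L))$. Once that bookkeeping is in place, surjectivity of $\sum_p H^1(D_p,M)\to H^1(D,M)$ (equivalently injectivity after dualizing, i.e.\ the Sylow restriction being injective) is exactly the group-theoretic input that makes the metacyclic hypothesis work, and the conclusion follows as above. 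Alternatively, one could phrase the whole argument directly in terms of $H^1(k_v,T)$ using Proposition~\ref{prop:torus}, but the cohomological dévissage over Sylow subgroups is the same.
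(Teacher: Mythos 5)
Your proof is correct, but it takes a genuinely different route from the paper's. Both arguments ultimately rest on the same two ingredients: a metacyclic group has cyclic Sylow subgroups and restriction to Sylow subgroups is injective on cohomology (the corestriction--restriction argument), plus Chebotarev's theorem realizing any cyclic subgroup of $\Gal(L/k)$ as a decomposition group of infinitely many places outside $S$. The paper, however, first converts $C_S(T)$ into $\coker\left[H^1(k,T)\to\prod_{v\in S}H^1(k_v,T)\right]$ via Proposition \ref{prop:torus}, then invokes a duality theorem of Schlank to identify this cokernel with $\Hom(\Sh^1_{S,\emptyset}(k,\That),\QQ/\ZZ)$, a subquotient of $\Sh^1_\omega(k,\That)$, and finally proves $\Sh^1_\Omega(\gg,\That)=0$ for metacyclic $\gg$ (Kunyavski\u{\i}'s lemma). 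You instead stay entirely on the $B$-group side and extend the Chebotarev argument of Proposition \ref{prop:torus-S0}: for each place $v$ you write the decomposition group $D_w$ in terms of its cyclic Sylow subgroups $D_p$, dualize the injection $H^1(D_w,\XX(T_L))\into\bigoplus_p H^1(D_p,\XX(T_L))$ to a surjection $\bigoplus_p B_{v_p}(T)\onto B_v(T)$ compatible with the maps $\lambda$, and conclude $\lambda_v(B_v(T))\subseteq\sum_p\lambda_{v_p}(B_{v_p}(T))\subseteq B^S(T)$, whence $B'(T)=B^S(T)$ and $C_S(T)=0$. What your route buys is self-containedness: it needs neither Proposition \ref{prop:torus} nor the external reference to Schlank's thesis. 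What the paper's route buys is the clean intermediate statement $\Sh^1_\omega(k,\That)=0$, which has independent interest. The one point to make fully precise in your write-up is the compatibility you flag yourself --- that $\lambda_{v_p}$ equals $\lambda_v$ composed with the dual of the restriction $H^1(D_w,\XX(T_L))\to H^1(D_p,\XX(T_L))$ --- which holds because all the maps $\lambda$ are duals of restrictions from $H^1(\Gal(L/k),\XX(T_L))$ and restriction is transitive; this is routine bookkeeping, and your proof is complete.
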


\begin{proof}
Set $T=H\tor$, then $T$ is a $k$-torus splitting over $L$.
By Theorem \ref{thm:WAS-CSH=0} and Proposition \ref{prop:Htor-B1}
it suffices to prove that $C_S(T)=0$.
Set
$$
\Ch^1_S(T)=\coker\left[H^1(k,T)\to\prod_{v\in S} H^1(k_v,T)\right].
$$
By Proposition \ref{prop:torus} $C_S(T)\simeq\Ch^1_S(T)$.

We write $\That$ for $\XX(\overline{T})$.
Set
\begin{align*}
\Sh^1_S(k,\That)&=\ker\left[ H^1(k,\That)\to\prod_{v\in\sV\smallsetminus S}H^1(k_v,\That)\right]\\
\Sh^1_\omega(k,\That)&=\bigcup_S \Sh^1_S(k,\That)\\
\Sh^1_{S,\emptyset}(k,\That)&=\Sh^1_{S}(k,\That)/\Sh^1_{\emptyset}(k,\That).
\end{align*}
By \cite[Theorem 4.2]{Schlank}
$$
\Ch^1_S(T)\simeq \Hom(\Sh^1_{S,\emptyset}(k,\That),\QQ/\ZZ).
$$
Now $\Sh^1_{S,\emptyset}(k,\That)$ is by definition is a subquotient of $\Sh^1_\omega(k,\That)$.
Thus in order to prove the theorem it suffices to show that $\Sh^1_\omega(k,\That)=0$.

Denote by $\gg$ the image of $\Gal(L/k)$ in $\Aut(\That)$.
Then $\gg$ is a finite metacyclic group.
We may and shall assume that $\Gal(L/k)=\gg$.
For a place $v$ of $k$, let $D_w\subset \gg$
denote the decomposition group of a place $w$ of $L$ extending $v$.
We write $\gg_v$ for $D_w$, it is defined up to conjugacy in $\gg$.
Since  $\Gal(\kbar/L)$ is a profinite group and $\That$ is a free abelian group,
we have $H^1(L,\That)=0$.
It follows that the inflation homomorphism $H^1(\gg,\That)\to H^1(k,\That)$ is an isomorphism.
Similarly, for each $v$ the homomorphism $H^1(\gg_v,\That)\to H^1(k_v,\That)$ is an isomorphism.
Thus we obtain an isomorphism
$$
\ker\left[H^1(\gg,\That)\to\prod_{v\in\sV\smallsetminus S} H^1(\gg_v,\That)\right]\isoto\Sh^1_S(k,\That).
$$
It follows from Chebotarev's density theorem that
$$
\Sh^1_\omega(k,\That)\simeq\ker\left[H^1(\gg,\That)\to\prod_{C} H^1(C,\That)\right],
$$
where $C$ runs over all cyclic subgroups of $\gg$.

Now let $\gg$ be any finite group and $Y$ a finitely generated $\gg$-module.
Let $i\in \ZZ$.
We write $\Hhat^i(\gg,Y)$ for the $i$-th Tate cohomology group.
Following an idea of \cite[page 734]{CTK}, we set
$$
\Sh^i_\Omega(\gg,Y)=\ker\left[\Hhat^i(\gg,Y)\to\prod_C \Hhat^i(C,Y)\right],
$$
where $C$ runs over all cyclic subgroups of $\gg$.
Then $\Sh^1_\omega(k,\That)\simeq\Sh^1_\Omega(\gg,\That)$.
In order to prove Theorem \ref{thm:Htor-metacyclic} it suffices to show that
$\Sh^1_\Omega(\gg,\That)=0$, which follows from the next lemma.
\end{proof}

\begin{lemma}[B. Kunyavski\u\i, private communication]
Let $\gg$ be a finite group and $Y$ a finitely generated $\gg$-module.
If $\gg$ is metacyclic, then $\Sh^i_\Omega(\gg,Y)=0$ for all $i\in \ZZ$.
\end{lemma}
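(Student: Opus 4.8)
```latex
The plan is to prove this by a reduction-to-Sylow-subgroups argument combined with the fact that for a cyclic group, a cyclic subgroup suffices to detect a Tate cohomology class. The key structural input is that $\gg$ is metacyclic, meaning every Sylow subgroup of $\gg$ is cyclic. I would use the standard fact that for any prime $p$ and any Sylow $p$-subgroup $\gg_p\subset\gg$, the restriction map $\Hhat^i(\gg,Y)\to\Hhat^i(\gg_p,Y)$ is injective on the $p$-primary component of $\Hhat^i(\gg,Y)$; since $\Hhat^i(\gg,Y)$ is a finite group (as $Y$ is finitely generated and $\gg$ is finite), it is the direct sum of its $p$-primary components over the primes $p$ dividing $|\gg|$, and the product of all the restriction maps $\Hhat^i(\gg,Y)\to\prod_p \Hhat^i(\gg_p,Y)$ is therefore injective.

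Now suppose $x\in\Sh^i_\Omega(\gg,Y)$, i.e. $\operatorname{res}^\gg_C(x)=0$ for every cyclic subgroup $C\subset\gg$. For each prime $p$ dividing $|\gg|$, the Sylow subgroup $\gg_p$ is itself cyclic by the metacyclic hypothesis, hence is one of the groups $C$ in the defining intersection, so $\operatorname{res}^\gg_{\gg_p}(x)=0$. By the injectivity of $\Hhat^i(\gg,Y)\to\prod_p\Hhat^i(\gg_p,Y)$ just recalled, this forces $x=0$. Hence $\Sh^i_\Omega(\gg,Y)=0$ for all $i\in\ZZ$.

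I would organize the write-up in two short steps: first recall (with a reference, e.g. to Brown's \emph{Cohomology of Groups} or Cassels--Fr\"ohlich) the Sylow-injectivity statement for Tate cohomology, noting that it applies uniformly in $i\in\ZZ$; second, observe that metacyclicity makes each $\gg_p$ cyclic and therefore one of the test subgroups $C$, and conclude. The only point requiring a word of care is the finiteness of $\Hhat^i(\gg,Y)$, needed so that decomposing into $p$-primary parts and checking injectivity component-by-component is legitimate: $Y$ finitely generated over $\ZZ$ and $\gg$ finite gives that $\Hhat^i(\gg,Y)$ is a finitely generated abelian group annihilated by $|\gg|$, hence finite.

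The main obstacle, such as it is, is purely expository rather than mathematical: one must be sure the cited injectivity-onto-the-$p$-part statement for Tate cohomology is invoked in the correct generality (all $i\in\ZZ$, arbitrary finitely generated module, not just finite modules), but this is classical. There is no genuine difficulty once the metacyclic condition is unwound into ``all Sylow subgroups cyclic.''
```
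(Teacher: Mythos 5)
Your argument is correct and is essentially identical to the paper's: both reduce to the fact that restriction to Sylow subgroups is jointly injective on Tate cohomology (the paper cites \cite[Chapter IV, Section 6, Corollary 4 of Proposition 8]{ANT} for exactly the Sylow-injectivity statement you spell out via $p$-primary components) and then use metacyclicity to note that each Sylow subgroup is cyclic, hence among the test subgroups $C$. Your write-up merely makes the standard decomposition into $p$-primary parts explicit, which the cited reference encapsulates.
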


\begin{proof}
Let $y\in\Sh^i_\Omega(\gg,Y)\subset\Hhat^i(\gg,Y)$.
For a subgroup $\hh\subset\gg$ let $\Res_\hh(y)\in\Hhat^i(\hh,Y)$
denote the restriction of $y$ to $\hh$.
Since $y\in\Sh^i_\Omega(\gg,Y)$, we have $\Res_C(y)=0$ for any cyclic subgroup $C\subset\gg$.
Since $\gg$ is metacyclic, every Sylow subgroup of $\gg$ is cyclic.
We see that $\Res_S(y)=0$ for for any Sylow subgroup $S$ of $\gg$.
By \cite[Chapter IV, Section 6, Corollary 4 of Proposition 8]{ANT}
we have $y=0$.
Thus $\Sh^i_\Omega(\gg,Y)=0$.
This completes the proofs of the lemma and of Theorem \ref{thm:Htor-metacyclic}.
\end{proof}



\begin{thebibliography}{CTX}

\bibitem[ANT]{ANT}
J.W.S. Cassels and A. Fr\"olich (eds.),
\emph{Algebraic Number Theory},
Academic Press, London, 1967.

\bibitem[B1]{B1}
M.~Borovoi,
\emph{On weak approximation in homogeneous
spaces of simply connected algebraic  groups,}
In: Proceedings of Internat. Conf. ``Automorphic Functions and Their Applications,
Khabarovsk, June 27--July 4, 1988'' (N. Kuznetsov, V. Bykovsky, eds.),
Khabarovsk, 1990, pp. 64--81.

\bibitem[B2]{B2}
M.~Borovoi,
\emph{The defect of weak approximation for homogeneous spaces,}
Ann. Fac. Sci. Toulouse, {\bf 8} (1999), 219--233.

\bibitem[CT]{CT}
J.-L. Colliot-Th\'el\`ene,
\emph{R\'esolutions flasques des groupes lin\'eaires connexes},
to appear in J. reine angew. Math.

\bibitem[CTK]{CTK}
J.-L. Colliot-Th\'el\`ene et  B.~Kunyavski\u{\i},
\emph{Groupe  de Picard et groupe de Brauer
des compactifications lisses d'espaces homog\`enes},
J. Algebraic Geom. {\bf 15}  (2006), 733--752.

\bibitem[CTX]{CTXu}
J.-L. Colliot-Th\'el\`ene and F. Xu,
\emph{Brauer-Manin obstruction for integral points of homogeneous spaces
and representation by integral quadratic forms,}
Preprint.

\bibitem[K]{K}
R.E.~Kottwitz, \emph{Stable trace formula: Elliptic singular terms},
Math. Ann. \textbf{275} (1986), 365--399.


\bibitem[M]{Milne}
J.S.~Milne,
\emph{Arithmetic Duality Theorems,}
2nd ed., BookSurge, LLS, 2006.

\bibitem[Sa]{Sansuc}
J.-J. Sansuc, \emph{Groupe de Brauer et arithm\'etique des groupes
  alg\'ebriques lin\'eaires sur un corps de nombres}, J. reine angew. Math.
  \textbf{327} (1981), 12--80.

\bibitem[Sch]{Schlank}
T.M.~Schlank,
\emph{Weak approximation for homogeneous spaces},
M.Sc. thesis, Tel Aviv University, 2008.

\end{thebibliography}
\end{document}